\documentclass{amsart}
\usepackage{amsmath,amsthm,amssymb}
\usepackage[all]{xy}

\newcommand{\gA}{\mathcal{A}}
\newcommand{\gM}{\mathcal{M}}

\newcommand{\gG}{\mathcal{G}}
\newcommand{\gS}{\mathcal{S}}
\newcommand{\cI}{\mathcal{I}}
\newcommand{\LiegG}{\mathfrak{g}}
\newcommand{\pinfty}{P_\infty}
\newcommand{\linfty}{L_\infty}
\newcommand{\Z}{\mathbb{Z}}
\newcommand{\N}{\mathbb{N}}

\newcommand{\shiftgA}{\tilde{\mathcal{A}}}
\newcommand{\vect}{\mathfrak{X}}
\newcommand{\multivect}{\vect^\bullet}

\DeclareMathOperator{\bigS}{S}
\newcommand{\linv}[1]{\stackrel{\leftarrow}{#1}}
\newcommand{\rinv}[1]{\stackrel{\rightarrow}{#1}}
\renewcommand{\hat}[1]{\widehat{#1}}
\renewcommand{\tilde}[1]{\widetilde{#1}}
\newcommand{\dg}{\mathrm{dg}}

\newcommand{\bitimes}[2]{\,\vphantom{\times}_{#1} \! \times_{#2}}
\newcommand{\llbracket}{[\![}
\newcommand{\rrbracket}{]\!]}

\newcommand{\suchthat}{:}
\newcommand{\defequal}{:=}

\newcommand{\comment}[1]{}

\newtheorem{thm}{Theorem}[section]
\newtheorem{prop}[thm]{Proposition} 
\newtheorem{lemma}[thm]{Lemma}
\newtheorem{cor}[thm]{Corollary}

\theoremstyle{definition} 
\newtheorem{dfn}[thm]{Definition}

\theoremstyle{remark}
\newtheorem{remark}[thm]{Remark}
\newtheorem{remarks}[thm]{Remarks}
\newtheorem{examples}[thm]{Examples}
\newtheorem{example}[thm]{Example}

\newtheorem*{ack}{Acknowledgements}
\numberwithin{equation}{section}

\title{On homotopy Poisson actions and reduction of symplectic $Q$-manifolds}
\author{Rajan Amit Mehta}

\begin{document}

\begin{abstract}
 We present a general framework for reduction of symplectic $Q$-manifolds via graded group actions. In this framework, the homological structure on the acting group is a multiplicative multivector field.
\end{abstract}

\maketitle

\section{Introduction} 

A degree $n$ symplectic $NQ$-manifold is an $\N$-graded manifold equipped with a degree $n$ symplectic structure and a symplectic homological vector field. There has recently been much interest in symplectic $NQ$-manifolds, primarily arising from the following one-to-one correspondences, due to Roytenberg \cite{royt:graded} and Severa \cite{severa:sometitle}:
\begin{align*}
\fbox{degree $1$ symplectic $NQ$-manifolds} &\longleftrightarrow \fbox{Poisson manifolds}\\
\fbox{degree $2$ symplectic $NQ$-manifolds} &\longleftrightarrow \fbox{Courant algebroids}
\end{align*}
These correspondences suggest that one should be able to study Poisson, Courant, and generalized complex geometries from the perspective of graded symplectic geometry. Following this premise, the papers \cite{bcmz} and \cite{cz:poisson} have shown that various constructions for Poisson, Courant, and generalized complex reduction can be interpreted as examples of symplectic reduction of $NQ$-manifolds. This point of view unifies the various methods of reduction and, in a sense, explains why they work.

Consider moment map reduction in the case where $\gS$ is a symplectic $NQ$-manifold and $\gG$ is a graded Lie group with a Hamiltonian action on $\gS$. Under the usual regularity assumptions, a graded version of the Marsden-Weinstein theorem guarantees that the reduced space inherits a symplectic structure. In order to ensure that the homological vector field on $\gS$ also descends to the quotient, one must impose additional hypotheses. In \cite{cz:poisson} and \cite{bcmz}, it was proposed (in the cases $n=1$ and $n=2$, respectively) that $\gG$ should be allowed to have extra structure; namely, the Lie algebra $\LiegG$ of $\gG$ should be a differential graded Lie algebra (DGLA). A DGLA structure on $\LiegG$ integrates to a multiplicative homological vector field on $\gG$, making $\gG$ into a \emph{$\dg$-group} or a \emph{$Q$-group}.  It was shown in \cite{cz:poisson} and \cite{bcmz} (again in their respective cases) that, if $\gG$ is connected and the comoment map $\LiegG[n] \to C^\infty(\gS)$ is a morphism of DGLAs, then the vector field on $\gS$ passes to the symplectic quotient. 

On the other hand, the following example of Poisson quotients illustrates that $Q$-groups or DGLAs are insufficiently general to incorporate all known notions of reduction. Let $M$ be a Poisson manifold, and let $G$ be a Poisson Lie group with a Poisson action on $M$. Then it is known that the quotient $M/G$, if smooth (for example, if the action is free and proper), inherits a Poisson structure. This example may be restated in the language of graded symplectic geometry as follows. The degree $1$ symplectic $NQ$-manifold associated to the Poisson manifold $M$ is the shifted cotangent bundle $T^*[1]M$. The action of $G$ on $M$ naturally lifts to a Hamiltonian action on $T^*[1]M$, and the symplectic quotient may be canonically identified with $T^*[1](M/G)$. The homological vector field on $T^*[1]M$ naturally descends to $T^*[1](M/G)$, giving the Poisson structure on $M/G$. 

The example that we have just described involves the action of a Poisson Lie group and does not fit into the framework of reduction by $Q$-group or DGLA action. This fact suggests that the theory of reduction of symplectic $NQ$-manifolds should allow the acting group to possess a general structure that includes both $Q$-groups and Poisson Lie groups as special cases. The search for such a generalization leads us to the notion of \emph{homotopy Poisson Lie group}. 

A homotopy Poisson structure is an $\linfty$ structure whose brackets satisfy a Leibniz rule. Such structures have appeared in the work of Voronov \cite{voronov:higher1,voronov:higher2} under the name ``higher Poisson brackets,''
and Cattaneo and Felder \cite{cf:formality} have called them $\pinfty$ structures. See \cite{bruce:higher} for more recent work involving such structures.

A homotopy Poisson Lie group is a graded Lie group whose algebra of functions has a multiplicative homotopy Poisson algebra structure. Both $Q$-groups and Poisson Lie groups are examples of homotopy Poisson Lie groups. The main result of this paper is a reduction result for homotopy Poisson Lie group actions---essentially, if a homotopy Poisson Lie group has a Hamiltonian action on a symplectic $NQ$-manifold, then under certain compatibility and regularity conditions, the symplectic quotient inherits a homological vector field. This result provides a quite general framework for moment map reduction of symplectic $NQ$-manifolds.

The structure of the paper is as follows. In \S\ref{sec:pinftyman}, we review the definition of homotopy Poisson manifolds and their characterization in terms of multivector fields. In \S\ref{sec:pinftylie}, we introduce homotopy Poisson Lie groups and describe their corresponding infinitesimal objects, which we call homotopy Lie bialgebras. We consider actions of homotopy Poisson Lie groups on homotopy Poisson manifolds in \S\ref{sec:pinftyact}, proving that if an action satisfies a compatibility condition, then its quotient inherits a homotopy Poisson structure. The heart of the paper is \S\ref{sec:hamiltonian}, where we turn to Hamiltonian actions of homotopy Poisson Lie groups on degree $1$ symplectic (not necessarily $N$)$Q$-manifolds. In \S\ref{sec:pinftyact2}, we show how the framework of \S\ref{sec:hamiltonian} includes homotopy Poisson quotients and, in particular, Poisson quotients. Finally, in \S\ref{sec:higher}, we describe a modification of the definition of homotopy Poisson Lie group that allows one to extend the results of \S\ref{sec:hamiltonian} to reduction of degree $n$ symplectic $Q$-manifolds.

\begin{ack}
 The author thanks Marco Zambon, Florian Sch\"{a}tz, Henrique Bursztyn, and Alberto Cattaneo for helpful comments and discussions.
\end{ack}

\section{Homotopy Poisson manifolds}\label{sec:pinftyman}
In this section, we recall the notions of homotopy Poisson algebra and homotopy Poisson manifold. For more details and applications of such structures, we refer the reader to \cite{cf:formality, cat:reduction, schatz:thesis} (under the name \emph{$\pinfty$-manifolds} and \cite{voronov:higher1, voronov:higher2, bruce:higher} (under the name \emph{higher Poisson manifolds}).

Let $\gA = \sum_{i \in \Z} \gA_i$ be a graded commutative algebra over a field $k$, and let $\shiftgA \defequal \gA[1]$ be the degree $1$ suspension of $\gA$, so that $\shiftgA_i = \gA_{i+1}$.  
\begin{dfn}\label{dfn:palg}
A \emph{homotopy Poisson algebra structure} on $\gA$ consists of a series of multilinear $\ell$-ary brackets $\beta_\ell : \shiftgA^{\otimes \ell} \to \shiftgA$ for $\ell \geq 0$ such that the following properties hold:
\begin{enumerate}
\item The degree of $\beta_\ell$ is $1$; that is, $|\beta_\ell(a_1, \dots, a_\ell)| = 1 + \sum |a_i|$ for all homogeneous $a_1, \dots, a_\ell \in \shiftgA$.
\item The brackets $\beta_\ell$ are graded symmetric.
\item If $a_1, \dots, a_{\ell-1}$ are homogeneous elements of $\shiftgA$, then $\beta_\ell(a_1, \dots, a_{\ell-1}, \cdot)$ is a graded derivation of degree $1 + \sum |a_i|$.
\item The brackets $\beta_\ell$ satisfy the generalized Jacobi identities (with the sign convention of \cite{voronov:higher1}; also see \cite{schatz:thesis}).
\end{enumerate}
\end{dfn}
In other words, a homotopy Poisson algebra is a graded commutative algebra with an $\linfty$ structure such that the multibrackets satisfy a Leibniz rule. We may think of a homotopy Poisson structure as being a generalization of a Poisson structure, where the Jacobi identity is allowed to only hold ``up to homotopy.'' We remark, however, that homotopy Poisson structures do not give the cofibrant resolution of the Poisson operad, where the Leibniz rule, as well as the commutativity and associativity of multiplication, would be similarly weakened.	

\begin{remarks}
\begin{enumerate}
\item The traditional definition of $\linfty$-algebra does not include a $0$-ary bracket $\beta_0$. Following common terminology, we say that a homotopy Poisson algebra is \emph{flat} if $\beta_0$ vanishes.
\item If $\beta_\ell$ is thought of as a bracket on $\gA$, as opposed to $\shiftgA$, then $\beta_\ell$ is of degree $2-\ell$.
\item We will say that a homotopy Poisson algebra is \emph{of finite type} if there exists a $q$ such that $\beta_\ell = 0$ for all $\ell > q$.	
\end{enumerate}
\end{remarks}

Let $\gA$ and $\gA'$ be homotopy Poisson algebras.  A (strict) morphism of homotopy Poisson algebras from $\gA$ to $\gA'$ is a map $\eta: \gA \to \gA'$ such that $\eta(\beta_\ell(a_1, \dots, a_\ell)) = \beta^\prime_\ell(\eta(a_1), \dots, \eta(a_\ell))$ for all $a_1, \dots, a_\ell \in \gA$. One could also try to define weak morphisms, but this will be unnecessary for the present purposes.

\begin{dfn}
A \emph{homotopy Poisson manifold} is a $\Z$-graded manifold $\gM$ whose algebra of functions $C^\infty(\gM)$ is equipped with a homotopy Poisson algebra structure of finite type. A map $\psi:\gM \to \gM'$ of homotopy Poisson manifolds is a homotopy Poisson morphism if the pullback map $\psi^*: C^\infty(\gM') \to C^\infty(\gM)$ is a morphism of homotopy Poisson algebras.
\end{dfn}

  Let $(\gM, \beta_\ell)$ be a homotopy Poisson manifold.  Properties (1)-(3) in Definition \ref{dfn:palg} imply that for each $\ell$ there exists an $\ell$-vector field $\pi_\ell \in \vect^\ell (\gM)$ of degree $2 - \ell$ such that $\beta_\ell$ is given by the derived bracket formula
\begin{equation}\label{eqn:derived}
	\beta_\ell(f_1, \dots, f_\ell) = [[ \cdots [\pi_\ell, f_1], \cdots ], f_\ell]
\end{equation}
for $f_1, \dots, f_\ell \in C^\infty(\gM)$.  

The algebra $\multivect(\gM)$ of multivector fields has a natural bigrading, and with respect to the total grading the $\ell$-vector fields $\pi_\ell$ are all of degree $2$.
If we let $\pi = \sum \pi_\ell$, then the generalized Jacobi identities are equivalent to the equation 
\begin{equation}\label{eqn:integrability}
[\pi,\pi]=0.  	
\end{equation}
A degree $2$ multivector field satisfying \eqref{eqn:integrability} has been called a \emph{Poisson multivector field} by Sch\"{a}tz \cite{schatz:thesis}. The derived bracket \eqref{eqn:derived} relating Poisson multivector fields and homotopy Poisson structures is due to Voronov \cite{voronov:higher1, voronov:higher2}.

The following statement summarizes the above discussion.

\begin{prop}
 Let $\gM$ be a $\Z$-graded manifold. The derived bracket formula \eqref{eqn:derived} gives a one-to-one correspondence between homotopy Poisson structures and Poisson multivector fields on $\gM$.
\end{prop}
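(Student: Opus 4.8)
The plan is to establish the bijection in two stages: first matching the underlying data---properties (1)--(3) of Definition \ref{dfn:palg} against the multiderivation structure carried by a multivector field---and then matching the integrability condition, property (4), against the equation \eqref{eqn:integrability}.

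For the first stage I would exploit the suspension $\shiftgA = \gA[1]$ through the d\'ecalage isomorphism. A graded-symmetric $\ell$-linear map $\beta_\ell \colon \shiftgA^{\otimes \ell} \to \shiftgA$ corresponds, after the standard d\'ecalage sign twist, to a graded-antisymmetric $\ell$-linear map on $\gA$. Under this dictionary the derivation property (3)---which together with graded symmetry (2) makes $\beta_\ell$ a derivation in every slot---says precisely that the associated antisymmetric map on $\gA$ is a multiderivation, that is, an element $\pi_\ell \in \vect^\ell(\gM)$; the degree condition (1) then forces $\pi_\ell$ to have degree $2-\ell$. Conversely, given such a $\pi_\ell$, I would verify that the iterated Schouten bracket in \eqref{eqn:derived} lands in $C^\infty(\gM)$ (the multivector degree drops by $\ell$), is a graded derivation in each argument $f_i$ (the Schouten bracket of a multivector field with a function is a derivation), and inherits the antisymmetry of $\pi_\ell$; transporting the result back across the d\'ecalage recovers a $\beta_\ell$ satisfying (1)--(3). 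Since a multivector field is determined by its action on functions, these two assignments are mutually inverse, yielding a bijection between families $(\beta_\ell)$ obeying (1)--(3) and degree-$2$ multivector fields $\pi = \sum_\ell \pi_\ell$ on $\gM$.

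The second stage is to identify the generalized Jacobi identities with integrability. Here I would appeal to Voronov's derived-bracket formalism: the assignment sending a multivector field $L$ to the iterated operator $[[\cdots[L,\,\cdot\,],\cdots],\,\cdot\,]$ intertwines the Schouten bracket on $\multivect(\gM)$ with the natural bracket on the space of multilinear operations. Expanding $[\pi,\pi] = \sum_{j,k} [\pi_j,\pi_k]$ and sorting both sides according to the number of function arguments, the $n$-ary component of \eqref{eqn:integrability} should match, term by term, the $n$-th generalized Jacobi identity, with the graded Jacobi identity and the derivation property of the Schouten bracket supplying the required Koszul signs. Consequently property (4) holds for $(\beta_\ell)$ if and only if $[\pi,\pi] = 0$.

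Combining the stages, I would restrict the bijection of the first stage to the loci cut out by property (4) on one side and \eqref{eqn:integrability} on the other, obtaining the asserted one-to-one correspondence between homotopy Poisson structures and Poisson multivector fields. I expect the main obstacle to be the sign bookkeeping in the second stage: one must carefully carry the d\'ecalage signs through the expansion of $[\pi,\pi]$ into its bihomogeneous components $[\pi_j,\pi_k]$ and check that they reproduce the higher Jacobi relations with exactly the conventions of \cite{voronov:higher1}. By comparison, the degree, symmetry, and derivation matching of the first stage is essentially formal once the suspension conventions are fixed.
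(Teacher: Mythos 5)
Your proposal is correct and follows essentially the same route as the paper, which presents this proposition as a summary of the preceding discussion: properties (1)--(3) of Definition \ref{dfn:palg} are identified with the existence of the degree-$(2-\ell)$ multiderivations $\pi_\ell$, and the generalized Jacobi identities are identified with $[\pi,\pi]=0$ via Voronov's higher derived bracket formalism, exactly as in your two stages. The paper leaves the décalage and sign bookkeeping implicit (deferring to \cite{voronov:higher1, voronov:higher2}), so your write-up simply makes explicit what the paper takes as known.
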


We leave the proof of the following proposition as an exercise for the reader.
\begin{prop}
If $\gM$ and $\gM'$ are homotopy Poisson manifolds with Poisson multivector fields $\pi$ and $\pi'$, respectively, then a smooth map $\psi: \gM \to \gM'$ is a homotopy Poisson morphism if and only if $\pi$ is $\psi$-related to $\pi'$.
\end{prop}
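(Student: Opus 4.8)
The plan is to unwind the homotopy Poisson morphism condition using the derived bracket formula \eqref{eqn:derived} and to recognize the result as the functional characterization of $\psi$-relatedness. By definition, $\psi$ is a homotopy Poisson morphism precisely when, for every $\ell$ and all $f_1, \dots, f_\ell \in C^\infty(\gM')$,
\begin{equation*}
\psi^*\bigl(\beta'_\ell(f_1, \dots, f_\ell)\bigr) = \beta_\ell(\psi^* f_1, \dots, \psi^* f_\ell),
\end{equation*}
where $\beta_\ell$ and $\beta'_\ell$ are the brackets corresponding to $\pi$ and $\pi'$, respectively. Substituting \eqref{eqn:derived} on both sides, and recalling that the iterated Schouten bracket $[[\cdots[\pi_\ell, g_1], \cdots], g_\ell]$ is the contraction of $\pi_\ell$ with $dg_1 \wedge \cdots \wedge dg_\ell$, this becomes the requirement that each $\pi_\ell$, acting as a multiderivation, satisfy $\pi_\ell(\psi^* f_1, \dots, \psi^* f_\ell) = \psi^*\bigl(\pi'_\ell(f_1, \dots, f_\ell)\bigr)$.

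The key step is to identify this functional identity with the pointwise pushforward condition defining $\psi$-relatedness. First I would use naturality of the de Rham differential, $d(\psi^* f) = \psi^*(df)$, to rewrite the left-hand side as the contraction of $\pi_\ell$ with $\psi^*(df_1) \wedge \cdots \wedge \psi^*(df_\ell)$. Then, applying the adjunction between the pushforward $\wedge^\ell T\psi$ on multivectors and the pullback on forms, this contraction equals the pairing of $(\wedge^\ell T_p\psi)(\pi_\ell|_p)$ with $df_1 \wedge \cdots \wedge df_\ell$ at $\psi(p)$, while the right-hand side is the pairing of $\pi'_\ell|_{\psi(p)}$ with the same form. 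Because the differentials of local graded coordinate functions span the cotangent space at each point, the functional identity holds for all $f_i$ if and only if $(\wedge^\ell T_p\psi)(\pi_\ell|_p) = \pi'_\ell|_{\psi(p)}$ for all $p$; that is, if and only if $\pi_\ell$ is $\psi$-related to $\pi'_\ell$.

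Finally, since $\wedge^\ell T\psi$ preserves multivector degree, the total fields $\pi = \sum_\ell \pi_\ell$ and $\pi' = \sum_\ell \pi'_\ell$ are $\psi$-related if and only if each homogeneous component $\pi_\ell$ is $\psi$-related to $\pi'_\ell$. Combining this with the degree-by-degree equivalence established above yields the claim in both directions.

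The main obstacle will be bookkeeping in the graded setting: one must track the Koszul signs appearing both in the derived bracket formula and in the pushforward--pullback adjunction, and verify that they match consistently across the two sides. A secondary point requiring care is the ``density of differentials'' argument, which in the graded category must be justified using local graded coordinates (including odd ones) whose differentials span the graded cotangent space; this is also precisely what underlies the injectivity of the correspondence $\pi_\ell \mapsto \beta_\ell$ in the preceding proposition, so I would lean on that fact rather than reprove it.
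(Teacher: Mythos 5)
The paper leaves this proposition as an exercise, so there is no official proof to compare against; your argument is the intended one and is essentially correct. Two small points. First, in the graded category one cannot literally evaluate multivector fields at points $p$ in the nonzero-degree directions, so the cleanest formulation is to take $\psi$-relatedness of $\pi_\ell$ and $\pi'_\ell$ in the algebraic sense, namely the identity $\pi_\ell(\psi^*f_1,\dots,\psi^*f_\ell)=\psi^*\bigl(\pi'_\ell(f_1,\dots,f_\ell)\bigr)$ itself (equivalently, equality of the components of $\pi_\ell$ and $\pi'_\ell$ in local graded coordinates); your reduction of the morphism condition to exactly this identity, together with the observation that differentials of graded coordinate functions (even and odd) separate the components, then closes both directions without any pushforward--pullback adjunction at points. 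A slicker way to get the ``if'' direction, consistent with how the paper argues elsewhere (e.g.\ Propositions \ref{prop:pdgla} and \ref{prop:infaction}), is to note that $\psi^*f$ is $\psi$-related to $f$ and that the Schouten bracket preserves relatedness, so relatedness of $\pi_\ell$ immediately propagates through the iterated derived bracket. Second, do not forget the $\ell=0$ term: there the morphism condition degenerates to $\psi^*\pi'_0=\pi_0$, which is precisely relatedness of the $0$-vector components, so it fits the pattern but is not covered by the ``contraction with $df_1\wedge\cdots\wedge df_\ell$'' phrasing.
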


\begin{examples}
\begin{enumerate}
\item A homotopy Poisson manifold where $\beta_\ell = 0$ for $\ell \neq 1$ is a \emph{$Q$-manifold}, that is a graded manifold equipped with a degree $1$ vector field $\pi_1$ such that $(\pi_1)^2 = 0$; such a vector field is called \emph{homological}. Many interesting examples of $Q$-manifolds come from Lie algebroids \cite{vaintrob}.
\item A homotopy Poisson manifold where $\beta_\ell = 0$ for $\ell \neq 2$ is a graded Poisson manifold. In particular, a homotopy Poisson structure on an ordinary manifold is the same thing as a Poisson structure, since a degree $2$ multivector field on an ordinary manifold is necessarily a bivector field.
\item A homotopy Poisson manifold where $\beta_\ell$ is nonzero for only $\ell = 1,2$ is a \emph{$QP$-manifold}, that is a graded Poisson manifold equipped with a homological Poisson vector field. This type of structure appears, for example, in BRST quantization \cite{henneaux-teitelboim}.
\item By allowing $\beta_\ell$ to be nonzero for more than two values, one obtains more general possibilities. For example, if $\beta_\ell$ is nonzero for only $\ell = 0,1,2$, then equation \eqref{eqn:integrability} decomposes into the following set of properties:
\begin{itemize}
\item $\pi_2$ is a Poisson bivector.
\item $\pi_1$ is a Poisson vector field.
\item $\pi_0$ is a $\pi_1$-invariant function.
\item The failure of $\pi_1$ to be homological, $(\pi_1)^2$, equals the Hamiltonian vector field of $\pi_0$.
\end{itemize}
\end{enumerate}
\end{examples}

Let $\gM$ be a homotopy Poisson manifold with Poisson multivector field $\pi$.  Define the operator $d_\pi \defequal [\pi, \cdot]$ on $\multivect(\gM)$. This operator is of total degree $1$ since $\pi$ is of total degree $2$, and the integrability equation \eqref{eqn:integrability} holds if and only if $d_\pi^2 = 0$.

\begin{prop}\label{prop:correspondence}
 The map $\pi \mapsto d_\pi$ gives a bijection from Poisson multivector fields on a graded manifold $\gM$ to degree $1$ differentials on $\multivect(\gM)$ that are derivations of the Schouten bracket.
\end{prop}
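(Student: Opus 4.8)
The plan is to prove the statement in three parts: that $\pi \mapsto d_\pi$ really does take values among the degree-$1$ Schouten-derivation differentials, that it is injective, and that it is surjective; only the last step requires genuine work. For well-definedness, note that since $\pi$ has total degree $2$ and the Schouten bracket lowers total degree by $1$, the operator $d_\pi = [\pi,\cdot]$ has degree $1$; that it is a derivation of the Schouten bracket is exactly the graded Jacobi identity $[\pi,[P,Q]] = [[\pi,P],Q] \pm [P,[\pi,Q]]$. By that same identity $d_\pi^2 = \tfrac{1}{2}[[\pi,\pi],\cdot]$, so $d_\pi$ is a differential precisely when $[\pi,\pi]$ is central for the bracket; by the centrality lemma below this forces $[\pi,\pi]=0$. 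Hence the image lies in the asserted set, and $\pi$ is Poisson exactly when $d_\pi^2 = 0$.

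The centrality lemma is that the only multivector field $\Phi$ of nonzero total degree with $[\Phi,\cdot]=0$ is $\Phi = 0$. Indeed, $[\Phi,f]$ is, up to sign, the contraction $\iota_{df}\Phi$, so its vanishing for all functions $f$ kills every component of $\Phi$ of positive exterior degree, leaving $\Phi \in \vect^0 = C^\infty(\gM)$; then $[\Phi,X] = \pm X(\Phi) = 0$ for all vector fields $X$ forces $\Phi$ locally constant, hence of total degree $0$. Injectivity is then immediate: $d_\pi = d_{\pi'}$ means $[\pi - \pi',\cdot] = 0$ with $\pi - \pi'$ of degree $2$, so $\pi = \pi'$.

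For surjectivity, let $D$ be a degree-$1$ Schouten-derivation differential. Since the Schouten bracket is homogeneous of exterior weight $-1$, the derivation identity splits by exterior weight, so $D = \sum_r D^{(r)}$ with each weight-$r$ piece $D^{(r)}\colon \vect^k \to \vect^{k+r}$ again a degree-$1$ Schouten-derivation. For $r \geq 0$ (so $\ell = r+1 \geq 1$) I would recover an $\ell$-vector field $\pi_\ell$, of internal degree $2-\ell$, from the iterated bracket $\pi_\ell(df_1,\dots,df_\ell) \defequal$ the $\vect^0$-component of $[[\cdots[Df_1,f_2]\cdots],f_\ell]$, using the derivation identity to verify $C^\infty$-multilinearity in the differentials so that this defines a genuine multivector field; the function $\pi_0$ is read off instead from the $\vect^0$-component of $DX$ for $X \in \vect^1$. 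Setting $\pi = \sum_\ell \pi_\ell$, the difference $\Delta \defequal D - [\pi,\cdot]$ is a degree-$1$ Schouten-derivation vanishing on functions (because $\pi$ reproduces $D$ there, contractions being nondegenerate). An induction on exterior degree then closes off the weight-$\geq -1$ part: if $\Delta$ kills $\vect^{k-1}$, then $[\Delta\Phi,f] = \Delta[\Phi,f] = 0$ for $\Phi \in \vect^k$, so $\Delta\Phi \in \vect^0$; but for weight $\geq -1$ and $k \geq 2$ one has $\Delta\Phi \in \vect^{k+r}$ with $k+r \geq 1$, whence $\Delta\Phi = 0$.

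What remains, and where I expect the real difficulty to lie, is ruling out the components of weight $r \leq -2$; equivalently, showing that no nonzero degree-$1$ Schouten-derivation lowers exterior degree by two or more. After the same induction, such a derivation collapses to a single tensorial leading map $h\colon \vect^{-r} \to \vect^0$ constrained by $h([\Phi,X]) = \pm X(h(\Phi))$ for all vector fields $X$. I would annihilate $h$ by a local computation: evaluated on constant-coefficient multivector fields $\partial_{a_1}\wedge\cdots$, which Schouten-commute with the coordinate fields $\partial_a$, the identity shows $h$ takes locally constant values, and then imposing the derivation identity on brackets of two such fields over-determines and kills $h$. Alternatively, one may realize $\multivect(\gM) \iso C^\infty(T^*[1]\gM)$ with the Schouten bracket equal to the canonical Poisson bracket; since the canonical symplectic form is homogeneous of nonzero weight, every closed form of nonzero weight is exact (via $\beta = \tfrac{1}{s}\,d\iota_E\beta$ for the Euler vector field $E$), so each weight-homogeneous Poisson-bracket derivation of nonzero weight is Hamiltonian, exhibiting $D$ directly as $[\pi,\cdot]$.
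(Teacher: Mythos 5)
Your core construction is the same as the paper's: decompose the differential $\delta$ by the ``weight'' it shifts multivector degree, recover the $\ell$-vector fields $\pi_\ell$ for $\ell \geq 1$ from the iterated brackets $[[\cdots[\delta f_1,f_2],\cdots],f_\ell]$ via the derived-bracket correspondence, and treat the weight $-1$ piece separately. Where you differ is in the details and, more importantly, in what you choose to verify. The paper extracts $\pi_0$ by applying $\delta_0$ to the Euler vector field of the internal grading, $\pi^\delta_0 \defequal -\tfrac{1}{2}\delta_0\varepsilon$, whereas you read $\pi_0$ off the $\vect^0$-component of $\delta$ on vector fields; both prescriptions produce the same function, though the paper's is the more economical definition. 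More substantively, the paper's proof simply writes $\delta = \sum\delta_\ell$ and handles $\ell > 0$ and $\ell = 0$, leaving injectivity, the two-sided-inverse check, and the components of weight $\leq -2$ entirely to the reader; you supply all three, and you are right that the negative-weight components are where the real content lies --- your centrality lemma, the induction on multivector degree to show $\Delta = D - [\pi,\cdot]$ vanishes, and the analysis of the leading tensorial map $h\colon \vect^{-r}\to\vect^0$ are exactly the verifications the paper omits. One caveat on your alternative argument via $\multivect(\gM)\iso C^\infty(T^*[1]\gM)$ and exactness of closed forms of nonzero weight: it presupposes that $D$ is a vector field on $T^*[1]\gM$, i.e.\ a derivation of the product as well as of the bracket, which is not literally among the hypotheses of the proposition (your main line of argument, like the paper's, only needs the bracket-derivation property, since the Leibniz rule for $\beta^\delta_\ell$ in its last slot comes from the Schouten bracket itself). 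Your local argument killing $h$ is still only a sketch (``over-determines and kills $h$''), but it is aimed at the right target, and overall your write-up is more complete than the one in the paper.
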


\begin{proof}
Let $\delta$ be a degree $1$ operator on $\multivect(\gM)$ that is a derivation of the Schouten bracket and such that $\delta^2=0$. Write $\delta = \sum \delta_\ell$, where $\delta_\ell$ takes elements of $\vect^i(\gM)$ to $\vect^{i + \ell -1}(\gM)$. For $\ell > 0$, we can then obtain an $\ell$-ary bracket $\beta^\delta_\ell$ by the formula
\begin{equation*}
 \beta^\delta_\ell(f_1, \dots, f_\ell) = [[ \cdots [\delta_\ell f_1, f_2], \cdots ], f_\ell].
\end{equation*}
The skew-symmetry and Leibniz rule for $\beta^\delta_\ell$ follow from the derivation property of $\delta_\ell$. If we define an $\ell$-vector field $\pi^\delta_\ell$ by the derived bracket equation
\begin{equation*}
	\beta^\delta_\ell(f_1, \dots, f_\ell) = [[ \cdots [[\pi^\delta_\ell, f_1],f_2], \cdots ], f_\ell],
\end{equation*}
then we immediately have $\delta_\ell = [\pi^\delta_\ell, \cdot]$.

To deal with the case $\ell = 0$, we use the Euler vector field $\varepsilon \in \vect(\gM)$ associated to the grading on $\gM$, defined by $\varepsilon(f) = |f| f$ for a function $f \in C^\infty(\gM)$ of homogeneous degree $|f|$. Then we let $\pi^\delta_0 = -\frac{1}{2}\delta_0 \varepsilon$.

The reader may verify that the map $\delta \mapsto \pi^\delta$ that we have described is a two-sided inverse to the map $\pi \mapsto d_\pi$.
\end{proof}

Recall that, if $\gM$ is a graded manifold, then $\multivect(\gM)$ is the algebra of polynomial functions on $T^*[1]\gM$. From this point of view, the Schouten bracket of multivector fields can be identified with the Poisson bracket associated to the canonical degree $1$ symplectic structure on $T^*[1]\gM$. Thus we may view Proposition \ref{prop:correspondence} as giving a one-to-one correspondence between $P_\infty$ structures on $\gM$ and (polynomial) symplectic homological vector fields on $T^*[1]\gM$.

\begin{remark}
 If $\gM$ is a homotopy Poisson manifold, then $T^*[1]\gM$ is a degree $1$ symplectic $Q$-manifold but in general will have coordinates in negative degrees even if $\gM$ is nonnegatively graded. Therefore it is essential for the purposes of this paper that we consider symplectic $Q$-manifolds that are $\Z$-(as opposed to $\N$-)graded.
\end{remark}

\section{Homotopy Poisson Lie groups}\label{sec:pinftylie}
\begin{dfn}
A \emph{homotopy Poisson Lie group} is a graded Lie group $\gG$ equipped with a homotopy Poisson manifold structure such that the multiplication map $m: \gG \times \gG \to \gG$ is a homotopy Poisson morphism.	
\end{dfn}
Examples of homotopy Poisson Lie groups include $Q$-groups (that is, graded Lie groups equipped with multiplicative homological vector fields) and Poisson Lie groups.

\begin{dfn} \label{dfn:leftinv}
Let $\gG$ be a graded Lie group.  A multivector field $X \in \multivect(\gG)$ is \emph{left-invariant} if $(0,X) \in \multivect(\gG \times \gG)$ is $m$-related to $X$; it is \emph{right-invariant} if $(X,0)$ is $m$-related to $X$.
\end{dfn}

The following statement is an analogue of the fact that for Poisson Lie groups, the Poisson bivector acts on left-invariant vector fields (see \cite{lu-wei}).
\begin{prop} \label{prop:pdgla}
 Let $\gG$ be a homotopy Poisson Lie group with Poisson multivector field $\phi$.  If $X \in \multivect(\gG)$ is a left-invariant multivector field, then $[\phi,X]$ is also left-invariant; if $X$ is right-invariant, then $[\phi,X]$ is also right-invariant.
\end{prop}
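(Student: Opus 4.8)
The plan is to translate the entire statement into assertions about $m$-relatedness of multivector fields and then to exploit the compatibility of the Schouten bracket with relatedness. By the proposition characterizing homotopy Poisson morphisms, multiplicativity of $\phi$ means precisely that the product multivector field $\Phi \defequal (\phi,0) + (0,\phi) \in \multivect(\gG \times \gG)$ is $m$-related to $\phi$. Likewise, left-invariance of $X$ says that $(0,X)$ is $m$-related to $X$, and what must be shown is that $(0,[\phi,X])$ is $m$-related to $[\phi,X]$.

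The key tool is the following: if $A, B \in \multivect(\gG \times \gG)$ are $m$-related to $A', B' \in \multivect(\gG)$, then $[A,B]$ is $m$-related to $[A',B']$. This is the multivector generalization of the familiar fact that $m$-relatedness of vector fields is preserved under the Lie bracket. I would justify it by regarding a $k$-vector field as a graded skew multiderivation of the function algebra and checking that relatedness is equivalent to the identity $A(m^*g_1,\dots,m^*g_k) = m^*\big(A'(g_1,\dots,g_k)\big)$ for all functions $g_i$ on $\gG$, which is manifestly stable under the derived-bracket formula for the Schouten bracket since $m^*$ is an algebra homomorphism. Applying this with $A = \Phi$, $A' = \phi$, $B = (0,X)$, and $B' = X$ shows that $[\Phi, (0,X)]$ is $m$-related to $[\phi, X]$.

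It remains to identify $[\Phi, (0,X)]$. Here I would use the decomposition of the Schouten bracket on a product: vector fields tangent to the first factor commute with those tangent to the second, and each kind annihilates the functions pulled back from the other factor, so that $[(A,0),(0,B)] = 0$ while $[(0,A),(0,B)] = (0,[A,B])$. Consequently
\begin{equation*}
[\Phi, (0,X)] = [(\phi,0),(0,X)] + [(0,\phi),(0,X)] = (0,[\phi,X]),
\end{equation*}
so that $(0,[\phi,X])$ is $m$-related to $[\phi,X]$, which is exactly left-invariance of $[\phi,X]$. The right-invariant case is identical, replacing $(0,X)$ by $(X,0)$ and using $[(0,\phi),(X,0)] = 0$ together with $[(\phi,0),(X,0)] = ([\phi,X],0)$.

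The step I expect to be the main obstacle is the compatibility of $m$-relatedness with the Schouten bracket, since $m$ is not a diffeomorphism and ``$m$-related'' for a multivector field is a pointwise pushforward condition rather than an honest transport of fields; getting the signs and the multiderivation reformulation exactly right in the graded setting, with the conventions of Definition~\ref{dfn:palg}, is where the real care is needed. The product decomposition of the Schouten bracket, by contrast, is routine once one reduces to generators.
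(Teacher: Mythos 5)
Your proposal is correct and follows essentially the same route as the paper's proof: multiplicativity gives that the product multivector field (which the paper writes as $(\phi,\phi)$ and you write as $(\phi,0)+(0,\phi)$) is $m$-related to $\phi$, the Schouten bracket preserves $m$-relatedness, and the bracket on the product collapses to $(0,[\phi,X])$. The only difference is that you spell out the justification of the relatedness lemma, which the paper simply asserts.
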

\begin{proof}
By the definition of homotopy Poisson Lie group, we have that $(\phi,\phi)$ is $m$-related to $\phi$. Relatedness is preserved by the Schouten bracket, so if $X$ is left-invariant then $[(\phi,\phi), (0,X)]$ is $m$-related to $[\phi,X]$. On the other hand, $[(\phi,\phi), (0,X)] = ([\phi,0],[\phi,X]) = (0, [\phi,X])$. Therefore $[\phi,X]$ satisfies the left-invariance property of Definition \ref{dfn:leftinv}. The proof for right-invariant multivector fields is similar.
\end{proof}

It follows from Proposition \ref{prop:pdgla} that the differential $d_\phi \defequal [\phi, \cdot]$ on $\multivect(\gG)$ may be restricted to the algebra of left-invariant multivector fields. If $\LiegG$ is the graded Lie algebra of $\gG$, then we can identify $\bigS(\LiegG [-1])$ with the algebra of left-invariant multivector fields\footnote{We use $\bigS(\LiegG [-1])$ instead of $\bigwedge \LiegG$ in order to agree with our sign convention for multivector fields, where $\multivect(\gM) = \bigS(\vect(\gM)[1])$.}. We denote the restriction of $d_\phi$ to $\bigS(\LiegG [-1])$ by $\hat{d}_\phi$. Thus we have the following infinitesimal description of homotopy Poisson Lie groups.

\begin{dfn}\label{dfn:pinftylie}
 A \emph{homotopy Lie bialgebra} is a graded Lie algebra $\LiegG$ equipped with a differential $\hat{d}_\phi$ on $\bigS(\LiegG [-1])$ that is a derivation of the Schouten bracket.
\end{dfn}

\begin{remark}
Definition \ref{dfn:pinftylie} may be equivalently formulated as follows: A homotopy Lie bialgebra is a graded Lie algebra $\LiegG$ such that $\LiegG^*[1]$ is equipped with a polynomial homological Poisson vector field $\hat{d}_\phi$. Any polynomial homological vector field on $\LiegG^*[1]$ is equivalent to an $L_\infty$-algebra structure on $\LiegG^*$, and the requirement that $\hat{d}_\phi$ be Poisson expresses a compatibility between the Lie algebra structure on $\LiegG$ and the $L_\infty$ structure on $\LiegG^*$ associated to $\hat{d}_\phi$.
\end{remark}

Examples of homotopy Lie bialgebras include differential graded Lie algebras (DGLAs) and Lie bialgebras. Indeed, these are the infinitesimal objects corresponding to $Q$-groups and Poisson Lie groups, respectively.

\section{Homotopy Poisson actions}\label{sec:pinftyact}
Let $\gM$ be a homotopy Poisson manifold with Poisson multivector field $\pi$, and let $\gG$ be a homotopy Poisson Lie group with Poisson multivector field $\phi$.
\begin{dfn}
A \emph{homotopy Poisson action} of $\gG$ on $\gM$ is a graded group action such that the action map $\sigma: \gM \times \gG \to \gM$ is a homotopy Poisson morphism.	
\end{dfn}
The following proposition gives an infinitesimal description of homotopy Poisson actions, generalizing that of Poisson actions given by Lu and Weinstein \cite{lu-wei}.
\begin{prop}\label{prop:infaction}
 Let $\sigma: \gM \times \gG \to \gM$ be a right homotopy Poisson action, and let $\rho: \LiegG \to \vect(\gM)$ be the induced infinitesimal action.  Then the map $\hat{\rho} \defequal \bigS(\rho[-1]) : \bigS(\LiegG [-1]) \to \multivect(\gM)$ is a morphism of differential Gerstenhaber algebras.
\end{prop}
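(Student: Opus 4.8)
The plan is to describe $\hat\rho$ through relatedness and then verify, in order of increasing difficulty, the three conditions for a morphism of differential Gerstenhaber algebras: compatibility with the product, with the Schouten bracket, and with the differentials. The organizing observation is that for $X \in \bigS(\LiegG[-1])$, viewed as a left-invariant multivector field on $\gG$, the field $(0,X) \in \multivect(\gM \times \gG)$ is $\sigma$-related to $\hat\rho(X)$. On generators $\xi \in \LiegG$ this is precisely the definition of the infinitesimal action $\rho$ (the fundamental vector field $\rho(\xi)$ is $\sigma$-related to the left-invariant vector field $\xi$), and it extends to all of $\bigS(\LiegG[-1])$ because $\hat\rho = \bigS(\rho[-1])$ is an algebra map while both $(0,\cdot)$ and the tangent map of $\sigma$ are algebra homomorphisms on multivectors, so relatedness is compatible with products. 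Because $\sigma(p,e)=p$ makes $\sigma$ a surjective submersion, the $\sigma$-related field is unique; hence this property characterizes $\hat\rho(X)$ completely, and I will use it repeatedly.

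With this in hand, compatibility with the product is immediate from the functoriality of $\bigS$. For the Schouten bracket I would argue again through relatedness: since $(0,X)$ and $(0,Y)$ are $\sigma$-related to $\hat\rho(X)$ and $\hat\rho(Y)$, the bracket $[(0,X),(0,Y)] = (0,[X,Y])$ is $\sigma$-related to $[\hat\rho(X),\hat\rho(Y)]$; but it is also $\sigma$-related to $\hat\rho([X,Y])$ by the characterization above, so uniqueness of the related field gives $\hat\rho([X,Y]) = [\hat\rho(X),\hat\rho(Y)]$. (This is the multivector refinement of the elementary fact that the fundamental vector fields of a right action form a morphism of graded Lie algebras.)

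The essential step is compatibility with the differentials, $\hat\rho \circ \hat d_\phi = d_\pi \circ \hat\rho$, and this is where the homotopy Poisson action hypothesis is used. That $\sigma$ is a homotopy Poisson morphism means exactly that the Poisson multivector field $(\pi,\phi)$ on $\gM\times\gG$ is $\sigma$-related to $\pi$. Bracketing this with the relatedness of $(0,X)$ and $\hat\rho(X)$ shows that $[(\pi,\phi),(0,X)]$ is $\sigma$-related to $[\pi,\hat\rho(X)] = d_\pi\hat\rho(X)$. The left-hand side is then evaluated componentwise, just as in the proof of Proposition \ref{prop:pdgla}: $[(\pi,\phi),(0,X)] = ([\pi,0],[\phi,X]) = (0,[\phi,X]) = (0,\hat d_\phi X)$, where I invoke Proposition \ref{prop:pdgla} to ensure that $[\phi,X]$ is again left-invariant, so that indeed $\hat d_\phi X = [\phi,X]$. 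Thus $(0,\hat d_\phi X)$ is $\sigma$-related to $d_\pi\hat\rho(X)$, while it is also $\sigma$-related to $\hat\rho(\hat d_\phi X)$ by the characterization of $\hat\rho$; uniqueness of the $\sigma$-related field then forces $\hat\rho(\hat d_\phi X) = d_\pi\hat\rho(X)$, as desired.

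I expect the only genuine obstacle to be the bookkeeping in this final step: justifying the componentwise evaluation of $[(\pi,\phi),(0,X)]$ and confirming that the invariance needed to identify $[\phi,X]$ with $\hat d_\phi X$ is exactly what Proposition \ref{prop:pdgla} supplies. The signs concealed in the suspension $[-1]$ and in the right-action convention are a secondary nuisance, but routing the whole argument through relatedness—which is manifestly compatible with both products and the Schouten bracket—should keep them under control.
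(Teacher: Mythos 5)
Your proposal is correct and follows essentially the same route as the paper: characterize $\hat{\rho}$ by the property that $(0,\linv{v})$ is $\sigma$-related to $\hat{\rho}(v)$, get bracket-compatibility from the right-action property, and derive compatibility with the differentials by bracketing the relatedness of $(\pi,\phi)$ to $\pi$ against that of $(0,\linv{v})$ to $\hat{\rho}(v)$ and computing $[(\pi,\phi),(0,\linv{v})]=(0,\hat{d}_\phi\linv{v})$. The only difference is that you spell out the uniqueness of the $\sigma$-related field and the appeal to Proposition \ref{prop:pdgla}, which the paper leaves implicit.
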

\begin{proof}
That $\hat{\rho}$ respects brackets follows, as in the case of ordinary Lie group actions, from the fact that $\sigma$ is a right action.  It remains to show that $\hat{\rho}$ respects differentials.

The map $\hat{\rho}$ is uniquely characterized by the property that $(0,\linv{v}) \in \multivect(\gM \times \gG)$ is $\sigma$-related to $\hat{\rho}(v)$ for any $v \in \bigS(\LiegG[-1])$, where $\linv{v}$ is the left-invariant multivector field associated to $v$.  

By the definition of homotopy Poisson action, we have that $(\pi,\phi)$ is $\sigma$-related to $\pi$. Since the Schouten bracket preserves relatedness, we then have that $[(\pi,\phi), (0,\linv{v})]$ is $\sigma$-related to $[\pi, \hat{\rho}(v)]$. On the other hand, $[(\pi,\phi), (0,\linv{v})] = (0, [\phi,\linv{v}]) = (0, d_\phi\!\! \linv{v})$, so we conclude that $\hat{\rho}(\hat{d}_\phi v) = [\pi, \hat{\rho}(v)] = d_\pi \hat{\rho}(v)$.
\end{proof}

\begin{lemma}\label{lemma:quotient}
 Let $\gM$ be a homotopy Poisson manifold, and let $\gG$ be a homotopy Poisson Lie group with a homotopy Poisson action on $\gM$.  Then the algebra $C^\infty(\gM)^\gG$ of $\gG$-invariant functions on $\gM$ is closed under the multibrackets $\beta_\ell$.  Therefore, if the action is free and proper (so that the quotient $\gM/\gG$ is a graded manifold and we may identify $C^\infty(\gM)^\gG$ with $C^\infty(\gM/\gG)$), then $\gM/\gG$ inherits a homotopy Poisson structure.
\end{lemma}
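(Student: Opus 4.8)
The plan is to prove the closure statement first; once it is established, the passage to the quotient is essentially formal. Recall that a function $f \in C^\infty(\gM)$ is $\gG$-invariant precisely when $\sigma^* f = \mathrm{pr}_\gM^* f$, where $\mathrm{pr}_\gM : \gM \times \gG \to \gM$ is the projection. Under the freeness and properness hypotheses, the quotient map $q: \gM \to \gM/\gG$ is a surjective submersion and $q^*$ identifies $C^\infty(\gM/\gG)$ with the subalgebra $C^\infty(\gM)^\gG$ of invariant functions, so it suffices to show that this subalgebra is preserved by each bracket $\beta_\ell$.

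For the closure, I would exploit that $\sigma$ is a homotopy Poisson morphism, which by definition means that the product multivector field $(\pi, \phi) = \mathrm{pr}_\gM^* \pi + \mathrm{pr}_\gG^* \phi$ on $\gM \times \gG$ is $\sigma$-related to $\pi$; equivalently, $\sigma^*$ intertwines the multibrackets. Given invariant functions $f_1, \dots, f_\ell$, I would compute
\begin{equation*}
 \sigma^*\bigl(\beta_\ell(f_1, \dots, f_\ell)\bigr) = \beta_\ell^{\gM \times \gG}(\sigma^* f_1, \dots, \sigma^* f_\ell) = \beta_\ell^{\gM \times \gG}(\mathrm{pr}_\gM^* f_1, \dots, \mathrm{pr}_\gM^* f_\ell),
\end{equation*}
the second equality being invariance. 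The key point is then that the $\gG$-component $\mathrm{pr}_\gG^* \phi$ of the product multivector field acts trivially on functions pulled back from $\gM$: since $\mathrm{pr}_\gM^* f_i$ is constant along $\gG$, one has $[\mathrm{pr}_\gG^*\phi, \mathrm{pr}_\gM^* f_i] = 0$, so only $\mathrm{pr}_\gM^*\pi$ contributes to the derived bracket and
\begin{equation*}
 \beta_\ell^{\gM \times \gG}(\mathrm{pr}_\gM^* f_1, \dots, \mathrm{pr}_\gM^* f_\ell) = \mathrm{pr}_\gM^*\bigl(\beta_\ell(f_1, \dots, f_\ell)\bigr).
\end{equation*}
Combining the two displays gives $\sigma^*(\beta_\ell(f_1, \dots, f_\ell)) = \mathrm{pr}_\gM^*(\beta_\ell(f_1, \dots, f_\ell))$, which is exactly the invariance of $\beta_\ell(f_1, \dots, f_\ell)$. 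I expect this computation---isolating the $\mathrm{pr}_\gG^*\phi$ term and showing it drops out---to be the main step; everything hinges on correctly identifying the product homotopy Poisson structure and on the factorwise vanishing of the Schouten bracket.

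With closure in hand, the remaining assertion is immediate: since $C^\infty(\gM)^\gG$ is a subalgebra closed under every $\beta_\ell$, the brackets restrict to it, and the defining axioms of Definition \ref{dfn:palg} (degree, graded symmetry, the Leibniz rule, and the generalized Jacobi identities) are universally quantified identities, hence continue to hold on the subalgebra; finite type is inherited because the bound on $\ell$ is the same. Transporting this structure across the isomorphism $q^*: C^\infty(\gM/\gG) \to C^\infty(\gM)^\gG$ endows $\gM/\gG$ with a homotopy Poisson structure. The one place demanding extra care is the nullary bracket $\beta_0 = \pi_0$: closure here means that the distinguished element $\pi_0$ is itself invariant, which I would check from the $0$-ary part of the defining relatedness $(\pi,\phi) \sim_\sigma \pi$---this reads $\sigma^*\pi_0 = \mathrm{pr}_\gM^*\pi_0 + \mathrm{pr}_\gG^*\phi_0$, so invariance of $\pi_0$ follows once the nullary bracket $\phi_0$ of $\gG$ plays no role (in particular in the flat case $\phi_0 = 0$, which covers the $Q$-group and Poisson Lie group examples).
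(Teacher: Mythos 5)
Your argument is correct and is essentially the paper's own proof: the paper phrases your key step as the observation that the projection $\tau = \mathrm{pr}_\gM$ is itself a homotopy Poisson morphism (which is exactly your computation that $[\mathrm{pr}_\gG^*\phi,\mathrm{pr}_\gM^*f]=0$), and then equates $\tau^*\beta_\ell(f_1,\dots,f_\ell)=\sigma^*\beta_\ell(f_1,\dots,f_\ell)$ just as you do. Your closing remark about $\beta_0$ is a correct and worthwhile refinement that the paper's proof silently skips: invariance of the distinguished element $\pi_0$ indeed requires $\phi_0=0$, i.e.\ flatness of $\gG$.
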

\begin{proof}
Let $f_1, \dots, f_\ell \in C^\infty(\gM)$ be $\gG$-invariant. Equivalently, $\tau^* f_i = \sigma^* f_i$, where $\tau: \gM \times \gG \to \gM$ is the map given by projection onto $\gM$. Clearly, $\tau$ is a homotopy Poisson morphism, so, letting $\tilde{\beta}_\ell$ denote the $\ell$-ary bracket on $C^\infty(\gM \times \gG)$, we have
\begin{equation*}
	\begin{split}
		\tau^* \beta_\ell(f_1, \dots, f_\ell) &= \tilde{\beta}_\ell(\tau^*f_1, \dots, \tau^*f_\ell) \\
&= \tilde{\beta}_\ell(\sigma^*f_1, \dots, \sigma^*f_\ell) \\
&= \sigma^* \beta_\ell(f_1, \dots, f_\ell).
	\end{split}
\end{equation*}
Thus, $\beta_\ell(f_1, \dots, f_\ell)$ is $\gG$-invariant.
\end{proof}

\section{Hamiltonian actions}\label{sec:hamiltonian}

Let $\gS$ be a degree $1$ symplectic $Q$-manifold; that is, $\gS$ is a ($\Z$-)graded manifold equipped with a nondegenerate degree $-1$ Poisson structure and a degree $1$ Poisson vector field $Q_\gS$ such that $Q_\gS^2 = 0$. Suppose that a homotopy Poisson Lie group $(\gG, \phi)$ has a Hamiltonian right action on $\gS$ with action map $\sigma: \gS \times \gG \to \gS$ and equivariant moment map $\mu: \gS \to \LiegG^*[1]$.  

Let $L$ and $R$ be the maps from $T^*[1]\gG$ to $\LiegG^*[1]$ given by left- and right-translation, respectively. Specifically, if $\linv{v}$ and $\rinv{v}$ denote, respectively, the left- and right-invariant multivector fields associated to $v \in \bigS(\LiegG[-1]) \subseteq C^\infty(\LiegG^*[1])$, then $L$ and $R$ are given by $R^* v = \rinv{v}$ and $L^* v = \linv{v}$.

We identify $T^*[1]\gG$ with $\LiegG^*[1] \times \gG$ via right-translation, i.e.\ by identifying $R$ with the projection map $\LiegG^*[1] \times \gG \to \LiegG^*[1]$. Using this identification, we may lift the moment map to a map $\tilde{\mu} \defequal (\mu \times 1): \gS \times \gG \to T^*[1]\gG$. The equivariance property of $\mu$ is then equivalent to the commutativity of the diagram
\begin{equation}\label{diag:equivariance}
 \xymatrix{\gS \times \gG \ar^{\tilde{\mu}}[r] \ar_{\sigma}[d] & T^*[1]\gG \ar^{L}[d] \\ \gS \ar^{\mu}[r] & \LiegG^*[1]}.
\end{equation}

We construct a vector field $\Phi$ on $\gS \times \gG$, given by $\Phi(f) = Q_\gS (f)$ for $f \in C^\infty(\gS)$ and $\Phi(\alpha) = \tilde{\mu}^* d_\phi \alpha$ for $\alpha \in C^\infty(\gG)$.  On the right side of the latter equation, $\alpha$ is viewed as a fiberwise-constant function on $T^*[1]\gG$ (or, equivalently, as a $0$-vector field on $\gG$). 

\begin{lemma}
 If the moment map $\mu$ is a $Q$-manifold morphism (that is, if $Q_\gS$ is $\mu$-related to $\hat{d}_\phi$), then $\Phi^2 = 0$.
\end{lemma}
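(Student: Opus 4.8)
The plan is to exploit that $\Phi$ has degree $1$, so that $\Phi^2 = \tfrac{1}{2}[\Phi,\Phi]$ is again a vector field on $\gS \times \gG$ (of degree $2$). Since a derivation is determined by its values on a generating set, and $C^\infty(\gS \times \gG)$ is generated by the pullbacks of $C^\infty(\gS)$ and $C^\infty(\gG)$, it suffices to verify that $\Phi^2$ annihilates each of these two families. On $C^\infty(\gS)$ this is immediate: for $f \in C^\infty(\gS)$ one has $\Phi^2 f = Q_\gS(Q_\gS f) = 0$ because $Q_\gS^2 = 0$. Thus the whole content of the lemma lies in showing $\Phi^2 \alpha = 0$ for $\alpha \in C^\infty(\gG)$, i.e.\ that $\Phi(\tilde{\mu}^* d_\phi \alpha) = 0$.

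To handle the latter, I would prove the auxiliary statement that $\Phi$ is $\tilde{\mu}$-related to the homological vector field $d_\phi = [\phi, \cdot\,]$ on $T^*[1]\gG$; that is, $\Phi \circ \tilde{\mu}^* = \tilde{\mu}^* \circ d_\phi$ as maps $\multivect(\gG) \to C^\infty(\gS \times \gG)$. Granting this, the desired identity follows at once, since then $\Phi^2 \alpha = \Phi(\tilde{\mu}^* d_\phi \alpha) = \tilde{\mu}^*(d_\phi^2 \alpha) = 0$, the last equality being equivalent to $[\phi,\phi] = 0$ (cf.\ \eqref{eqn:integrability}). Because $\Phi \circ \tilde{\mu}^*$ and $\tilde{\mu}^* \circ d_\phi$ are both derivations over the algebra morphism $\tilde{\mu}^*$, it is enough to check their agreement on a set of algebra generators of $\multivect(\gG)$, for which I would take the fiberwise-constant functions $\alpha \in C^\infty(\gG)$ together with the left-invariant vector fields $\linv{v}$, $v \in \LiegG$. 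On the fiberwise-constant functions the two maps agree by the very definition of $\Phi$. On $\linv{v}$ I would use the equivariance diagram \eqref{diag:equivariance}, which gives $\tilde{\mu}^* \linv{v} = \sigma^* \mu^* v$, together with Proposition \ref{prop:pdgla} and the definition of $\hat{d}_\phi$, which give $d_\phi \linv{v} = \linv{(\hat{d}_\phi v)}$ and hence $\tilde{\mu}^*(d_\phi \linv{v}) = \sigma^* \mu^*(\hat{d}_\phi v)$. The hypothesis that $\mu$ is a $Q$-morphism then rewrites $\mu^*(\hat{d}_\phi v)$ as $Q_\gS(\mu^* v)$, so the generator check collapses to the single identity $\Phi(\sigma^* \mu^* v) = \sigma^*\big(Q_\gS(\mu^* v)\big)$.

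The main obstacle is therefore this last identity, which is an instance of the claim that $\Phi$ is $\sigma$-related to $Q_\gS$. This does not follow formally from the construction, because $\Phi$ is built to be $\mathrm{proj}_{\gS}$-related to $Q_\gS$ rather than $\sigma$-related to it, and the two maps $\sigma, \mathrm{proj}_{\gS} : \gS \times \gG \to \gS$ differ by the action. Here I expect the Hamiltonian hypotheses to be essential: the fundamental vector field of $v$ is the Hamiltonian vector field $\{\mu^* v, \cdot\,\}$, to which $\linv{v}$ is $\sigma$-related, while $Q_\gS$ is a derivation of the Poisson bracket on $\gS$. Combining these with the definition of $\Phi$ on each factor --- concretely, by comparing $\Phi$ with $Q_\gS$ after the shearing diffeomorphism $(s,g) \mapsto (s \cdot g, g)$ that converts $\sigma$ into $\mathrm{proj}_{\gS}$ --- should yield $\Phi(\sigma^* h) = \sigma^*(Q_\gS h)$ for all $h \in C^\infty(\gS)$, completing the argument. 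I anticipate that verifying this $\sigma$-relatedness, and in particular keeping track of the interplay between the left-translation used in \eqref{diag:equivariance} and the right-translation used to trivialize $T^*[1]\gG$, is the delicate part of the proof.
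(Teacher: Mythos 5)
Your first two paragraphs are sound and amount to an unpacking of the paper's argument: the paper proves the lemma by exhibiting $\gS \times \gG$ as the fiber product $\gS \bitimes{\mu}{R} T^*[1]\gG$ and showing that $\Phi$ is the restriction of the product vector field $(Q_\gS, d_\phi)$, which in particular makes $\Phi$ both $\tau$-related to $Q_\gS$ and $\tilde{\mu}$-related to $d_\phi$; your reduction of everything to the $\tilde{\mu}$-relatedness, checked on algebra generators of $\multivect(\gG)$, is a legitimate coordinate version of the same idea. The gap is in your third paragraph. By choosing the \emph{left}-invariant vector fields $\linv{v}$ as generators you are forced to the identity $\Phi(\sigma^*\mu^*v) = \sigma^*(Q_\gS\mu^*v)$, and you propose to obtain it from the statement that $\Phi$ is $\sigma$-related to $Q_\gS$. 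But that statement is precisely the condition ``$\sigma$ is a $Q$-manifold morphism'' --- the \emph{other} half of Definition \ref{dfn:qham}, which is deliberately not assumed in this lemma; by the remark following Theorem \ref{thm:reduction} it follows from the hypothesis on $\mu$ only when $\gG$ is connected, so a derivation of it from the $\mu$-hypothesis alone (as your shearing-diffeomorphism sketch would have to provide) cannot succeed in general, and in any case the sketch is not carried out.

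The repair is to use the generators adapted to the right-trivialization $T^*[1]\gG \cong \LiegG^*[1] \times \gG$ that defines $\tilde{\mu}$, namely the \emph{right}-invariant fields $\rinv{v} = R^*v$. Then $\tilde{\mu}^*\rinv{v} = \tau^*\mu^*v$, so $\Phi(\tilde{\mu}^*\rinv{v}) = \tau^*Q_\gS(\mu^*v) = \tau^*\mu^*(\hat{d}_\phi v) = \tilde{\mu}^*R^*(\hat{d}_\phi v)$, and this equals $\tilde{\mu}^*(d_\phi\rinv{v})$ provided $d_\phi\rinv{v} = \rinv{(\hat{d}_\phi v)}$, i.e.\ provided $R$ is a $Q$-manifold morphism. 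That last fact is the one genuinely external input, a consequence of the multiplicativity of $\phi$ (cited from \cite{me:qgpd} in the paper's proof), and it is entirely absent from your argument: Proposition \ref{prop:pdgla} only tells you that $d_\phi$ preserves right-invariance, not that its restriction to right-invariant fields is the \emph{same} operator $\hat{d}_\phi$ defined via left-invariant fields. With that ingredient supplied and the generators switched, your outline closes and coincides in substance with the paper's fiber-product proof.
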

\begin{proof}
Let $\tau$ denote the projection map from $\gS \times \gG$ to $\gS$.  Observe that the diagram 
\begin{equation}
 \xymatrix{\gS \times \gG \ar^{\tilde{\mu}}[r] \ar_{\tau}[d] & T^*[1]\gG \ar^{R}[d] \\ \gS \ar^{\mu}[r] & \LiegG^*[1]}
\end{equation}
presents $\gS \times \gG$ as the fiber product $\gS \bitimes{\mu}{R} T^*[1]\gG$. The multiplicativity of $\phi$ implies that $R$ is a $Q$-manifold morphism \cite{me:qgpd}. If $\mu$ is also a $Q$-manifold morphism, then it follows that the product vector field $(Q_\gS, d_\phi) \in \vect(\gS \times T^*[1]\gG)$ is tangent to the fiber product. It is immediate from the definition of $\Phi$ that, in this case, the restriction of $(Q_\gS, d_\phi)$ to the fiber product is equal to $\Phi$. The equation $\Phi^2=0$ is then a consequence of the fact that $(Q_\gS, d_\phi)^2 = (Q_\gS^2, d_\phi^2) = 0$.
\end{proof}

Thus, if $\mu$ is a $Q$-manifold morphism, then $\Phi$ gives $\gS \times \gG$ the structure of a $Q$-manifold.

\begin{dfn}\label{dfn:qham}
The action of $\gG$ on $\gS$ is called \emph{$Q$-Hamiltonian} if the moment map $\mu$ and the action map $\sigma$ are both $Q$-manifold morphisms.
\end{dfn}

\begin{thm}\label{thm:reduction}
Let $\gG$ be a flat homotopy Poisson Lie group with a $Q$-Hamiltonian action on a degree $1$ symplectic $Q$-manifold $\gS$. If $0$ is a regular value of $\mu$ and the action of $\gG$ on  $\mu^{-1}(0)$ is free, then the homological vector field on $\gS$ descends to a symplectic homological vector field on $\mu^{-1}(0)/\gG$.
\end{thm}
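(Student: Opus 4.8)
\section*{Proof proposal}

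The plan is to treat the symplectic and the homological parts separately: the symplectic structure on the quotient comes from the graded Marsden--Weinstein construction, and the work is to show that $Q_\gS$ survives reduction. Write $\iota : \mu^{-1}(0) \hookrightarrow \gS$ for the inclusion and $p : \mu^{-1}(0) \to \mu^{-1}(0)/\gG$ for the projection. Under the stated regularity and freeness (together with properness, which I assume as usual), the graded analogue of the Marsden--Weinstein theorem produces a graded manifold $\mu^{-1}(0)/\gG$ with a symplectic form $\omega_{\red}$ characterized by $p^*\omega_{\red} = \iota^*\omega$. It then remains to establish three things: that $Q_\gS$ is tangent to $\mu^{-1}(0)$, that its restriction descends along $p$, and that the descended field is symplectic and squares to zero.

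For tangency I would work with the ideal $\cI \subseteq C^\infty(\gS)$ cutting out $\mu^{-1}(0)$, which is generated by the components $\mu^* v$ for $v \in \LiegG$. Since $\mu$ is a $Q$-manifold morphism, $Q_\gS(\mu^* v) = \mu^*(\hat{d}_\phi v)$. The decisive point is that flatness of $\gG$ forces $\hat{d}_\phi v$ to vanish at the origin of $\LiegG^*[1]$: as a function on $\LiegG^*[1]$, its constant term would be the $0$-vector field part of $[\phi, \linv{v}]$, namely $[\phi_0, \linv{v}]$, which is zero because $\phi_0 = 0$. Hence $\hat{d}_\phi v$ lies in the ideal of functions vanishing at the origin, so $\mu^*(\hat{d}_\phi v) \in \cI$ and $Q_\gS(\cI) \subseteq \cI$. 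This is exactly the step where flatness replaces the DGLA hypothesis of the earlier works.

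For descent I would show that $Q_\gS|_{\mu^{-1}(0)}$ is $p$-projectable, i.e.\ that $[Q_\gS, \rho(v)]$ is tangent to the $\gG$-orbits along $\mu^{-1}(0)$ for each $v \in \LiegG$, where $\rho : \LiegG \to \vect(\gS)$ is the infinitesimal action. Since the action is Hamiltonian, $\rho(v)$ is the Hamiltonian vector field of $\mu^* v$, and since $Q_\gS$ is symplectic, $[Q_\gS, \rho(v)] = X_{Q_\gS(\mu^* v)} = X_{\mu^*(\hat{d}_\phi v)}$, the last step again using that $\mu$ is a $Q$-morphism. Decomposing $\hat{d}_\phi v = \sum_{k \geq 1} P_k$ into components of fiber degree $k$---the $k=0$ term being absent by flatness---each $\mu^* P_k$ is a sum of monomials containing at least one constraint factor $\mu^* w$. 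Expanding $X_{\mu^* P_k}$ by the Leibniz rule and restricting to $\mu^{-1}(0)$, every term either retains a constraint factor, and so vanishes there, or has had the Hamiltonian derivative land on a factor $\mu^* w$, producing the action field $\rho(w)$. Thus $[Q_\gS, \rho(v)]|_{\mu^{-1}(0)}$ lies in the span of the $\rho(w)$, which is precisely tangency to the orbits; this is the infinitesimal shadow of the hypothesis that $\sigma$ is a $Q$-morphism. Consequently $Q_\gS|_{\mu^{-1}(0)}$ descends to a vector field $Q_{\red}$ on $\mu^{-1}(0)/\gG$.

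The two remaining properties are formal. That $Q_{\red}^2 = 0$ follows from $Q_\gS^2 = 0$ by restriction and descent. That $Q_{\red}$ is symplectic follows from $p^*(L_{Q_{\red}}\omega_{\red}) = \iota^*(L_{Q_\gS}\omega) = 0$ together with the injectivity of $p^*$ on forms, so $\mu^{-1}(0)/\gG$ is a symplectic $Q$-manifold. I expect the main obstacle to be the descent step: controlling the contribution of the higher homotopy brackets $\phi_\ell$ (the nonlinear part of $\hat{d}_\phi v$) and checking that, along the constraint surface, they contribute only orbit-tangent directions rather than obstructing projectability. Flatness ($\phi_0 = 0$) is the hypothesis that makes both the tangency and the descent go through, and carefully tracking its use through the Leibniz expansions is the heart of the argument.
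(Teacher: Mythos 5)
Your tangency step is identical to the paper's: flatness of $\phi$ kills the constant term of $\hat{d}_\phi v$, so $Q_\gS(\mu^*v) = \mu^*(\hat{d}_\phi v)$ lies in the constraint ideal $\cI$ and $\cI$ is $Q_\gS$-invariant. Where you diverge is the descent step. The paper argues algebraically: it first notes that equivariance gives $\{\mu^*v,\mu^*w\} = \mu^*[v,w]$, so $\cI$ is a coisotrope, describes the reduced functions as $N(\cI)/\cI$ with $N(\cI)$ the Poisson normalizer, and observes that $Q_\gS$-invariance of $\cI$ forces $Q_\gS$-invariance of $N(\cI)$ (since $Q_\gS$ is a Poisson derivation), whence $Q_\gS$ descends in one line. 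You instead prove projectability geometrically, computing $[Q_\gS,\rho(v)] = X_{\mu^*(\hat{d}_\phi v)}$ and expanding by the Leibniz rule to see that only the linear-in-constraints part survives on $\mu^{-1}(0)$, landing in the span of the $\rho(w)$. Your computation is correct and arguably more illuminating about \emph{why} the higher brackets $\phi_\ell$ do no harm, but it is also more work than the normalizer argument, which absorbs all of that into the single identity $\{N(\cI),\cI\}\subseteq\cI$.

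There is one genuine gap: your descent criterion --- $[Q_\gS,\rho(v)]$ tangent to orbits for all $v\in\LiegG$ --- only yields invariance of $Q_\gS|_{\mu^{-1}(0)}$ under the identity component of $\gG$, whereas the theorem does not assume $\gG$ connected and the quotient is by all of $\gG$. This is exactly where the hypothesis that the action map $\sigma$ is a $Q$-manifold morphism is actually needed (the paper remarks that for connected $\gG$ it is redundant): the paper restricts $\sigma$ to $\sigma_0:\mu^{-1}(0)\times\gG\to\mu^{-1}(0)$, notes that the restriction of $\Phi$ there is the product field $(Q_\gS,\phi_1)$, and invokes Lemma \ref{lemma:quotient} for the action of $(\gG,\phi_1)$ on $(\mu^{-1}(0),Q_\gS)$ to get honest $\gG$-invariance of $Q_\gS$-images of invariant functions. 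You gesture at $\sigma$ being a $Q$-morphism as the ``global shadow'' of your bracket computation, but you never use it; to close the argument you should either add the disconnectedness step along the paper's lines or restrict the claim to connected $\gG$.
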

\begin{proof}
Since $0$ is a regular value, the ideal $\cI$ of functions on $\gS$ that vanish on $\mu^{-1}(0)$ is generated by $\{ \mu^*v \suchthat v \in \LiegG[-1] \}$. The equivariance property of $\mu$ implies that $\{\mu^*v, \mu^*w\} = \mu^*[v,w]$ for all $v,w \in \LiegG[-1]$. Therefore $\cI$ is closed under the Poisson bracket, which means that $\mu^{-1}(0)$ is a coisotropic submanifold of $\gS$.

Let $v \in \LiegG[-1]$. Then $Q_\gS (\mu^* v) = \mu^* (\hat{d}_\phi v)$, since $\mu$ is assumed to be a $Q$-manifold morphism. The flatness of $\phi$ ensures that $\hat{d}_\phi v$ does not have a constant component, so $\hat{d}_\phi v$ vanishes at $0$, and therefore $Q_\gS (\mu^* v)$ vanishes on $\mu^{-1}(0)$. This proves that $\cI$ is $Q_\gS$-invariant.

If $\gG$ is connected, then the algebra of functions on the symplectic quotient is $N(\cI)/\cI$, where $N(\cI)$ is the Poisson normalizer of $\cI$. The fact that $\cI$ is $Q_\gS$-invariant implies that $N(\cI)$ is $Q_\gS$-invariant, and it follows that $Q_\gS$ descends to $N(\cI)/\cI$. 
The descended vector field inherits the properties of being symplectic and homological from $Q_\gS$.

In the case where $\gG$ is not connected, we need to furthermore show that the restriction of $Q_\gS$ to $\mu^{-1}(0)$ descends to the global quotient. For this, we consider the restriction of the action map $\sigma$ to $\sigma_0 : \mu^{-1}(0) \times \gG \to \mu^{-1}(0)$. The restriction of the vector field $\Phi$ to $\mu^{-1}(0) \times \gG$ is simply the product vector field $(Q_\gS, \phi_1)$, where $\phi_1 \in \vect(\gG)$ is the vector field component of $\phi$. The $Q$-Hamiltonian property of the action implies that $\sigma_0$ is a morphism of $Q$-manifolds. Thus, we may apply Lemma \ref{lemma:quotient} to the action of $(\gG, \phi_1)$ on $(\mu^{-1}(0), Q_\gS)$ to conclude that $Q_\gS$ descends to the quotient.
\end{proof}

\begin{remark}
	In the proof of Theorem \ref{thm:reduction}, the condition that $\sigma$ be a $Q$-manifold morphism was not needed in the case where $\gG$ is connected. The reason for this is that, when $\gG$ is connected, it can be shown that the condition that $\mu$ be a $Q$-manifold morphism implies the same for $\sigma$.
\end{remark}

\section{Homotopy Poisson actions revisited}\label{sec:pinftyact2}

In ordinary symplectic geometry, a standard example of Hamiltonian action is that of cotangent lift. Specifically, if $G$ is a Lie group with a free and proper action on a manifold $M$, then there is a natural way to lift the action of $G$ on $M$ to a Hamiltonian action of $G$ on $T^*M$, where the symplectic quotient may be identified with $T^*(M/G)$. In a sense, this allows us to view ordinary quotients as examples of symplectic quotients. 

In this section, we interpret quotients in the homotopy Poisson category in terms of symplectic quotients, via a supergeometric version of the cotangent lift construction. Throughout this section, let $(\gM, \pi)$ be a homotopy Poisson manifold, and let $(\gG, \phi)$ be a flat homotopy Poisson Lie group with a free and proper right homotopy Poisson action $\sigma: \gM \times \gG \to \gM$. 

First, we will describe the shifted cotangent lift action of $\gG$ on $T^*[1]\gM$.

Consider the pullback bundle $\sigma^*(T^*[1]\gM)$. We may make the identification $C^\infty(\sigma^*(T^*[1]\gM)) = C^\infty(\gM \times \gG) \otimes_\sigma \multivect(\gM)$. There is a natural push-forward map $\sigma_*: \multivect(\gM \times \gG) \to C^\infty(\sigma^*(T^*[1]\gM))$, satisfying the property that $\sigma_* Z = 1 \otimes X$ whenever $Z \in \multivect(\gM \times \gG)$ is $\sigma$-related to $X \in \multivect(\gM)$. The map $\sigma_*$ has a right inverse $\iota: C^\infty(\sigma^*(T^*[1]\gM)) \to \multivect(\gM \times \gG)$, whose image is the space of horizontal multivector fields. Thus, if $Z$ is a horizontal multivector field on $\gM \times \gG$, then $\iota \circ \sigma_* Z = Z$.

Let $\tilde{\sigma}$ and $\tilde{\tau}$ be maps from $\sigma^*(T^*[1]\gM)$ to $T^*[1]\gM$, given by $\tilde{\sigma}^*X = 1 \otimes X$ and $\tilde{\tau}^*X = \sigma_*(X,0)$ for $X \in \multivect(M)$. Letting $\tau$ be the projection map from $\gM \times \gG$ to $\gG$, we have that the following diagrams commute:
\begin{align*}
 \xymatrix{\sigma^*(T^*[1]\gM) \ar^{\tilde{\tau}}[r] \ar[d] & T^*[1]\gM \ar[d] \\ \gM \times \gG \ar^{\tau}[r] & \gM} && \xymatrix{\sigma^*(T^*[1]\gM) \ar^{\tilde{\sigma}}[r] \ar[d] & T^*[1]\gM \ar[d] \\ \gM \times \gG \ar^{\sigma}[r] & \gM}
\end{align*}
We may identify $\sigma^*(T^*[1]\gM)$ with $T^*[1]\gM \times \gG$ by taking $\tilde{\tau}$ to be the projection map, and we may then interpret $\tilde{\sigma}$ as the action map for a right action of $\gG$ on $T^*[1]\gM$.

\begin{dfn}
 The action of $\gG$ on $T^*[1]\gM$ associated to the map $\tilde{\sigma}:\sigma^*(T^*[1]\gM) = T^*[1]\gM \times \gG \to T^*[1]\gM$ is the \emph{(shifted) cotangent lift} of $\sigma$.
\end{dfn}

We emphasize that this cotangent lift construction is simply a supergeometric analogue of the ordinary cotangent lift construction. In particular, the homotopy Poisson structures on $\gG$ and $\gM$ do not play any role in the construction.

Next, we will show that $\tilde{\sigma}$ is a Hamiltonian action, where the moment map arises from the infinitesimal action $\LiegG \to \vect(\gM)$. 

Let $\rho: \LiegG \to \vect(\gM)$ be the infinitesimal action associated to $\sigma$, characterized by the property that $(0,\linv{v}) \in \vect(\gM \times \gG)$ is $\sigma$-related to $\rho(v)$ for $v \in \LiegG$. Similarly, the infinitesimal action associated to $\tilde{\sigma}$ is a map $\tilde{\rho} : \LiegG \to \vect(T^*[1]\gM)$, characterized by the property that $\tilde{v}$ is $\tilde{\sigma}$-related to $\tilde{\rho}(v)$ for $v \in \LiegG$, where $\tilde{v} \in \vect(\sigma^*(T^*[1]\gM))$ is determined by the equations
\begin{align}
 \tilde{v}(\tilde{\tau}^*X) &= 0, \label{eqn:tildev1} \\
\tilde{v}(\alpha \otimes 1) &= \linv{v}(\alpha) \otimes 1 \label{eqn:tildev2}
\end{align}
for $X \in \multivect(\gM)$ and $\alpha \in C^\infty(\gG)$.
\begin{lemma}\label{lemma:momentlift}
 $\tilde{\rho}(v) = [\rho(v), \cdot]$ for all $v \in \LiegG$.
\end{lemma}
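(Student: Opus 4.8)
The plan is to pin down $\tilde\rho(v)$ by checking that the operator $[\rho(v),\cdot]$ on $\multivect(\gM)=C^\infty(T^*[1]\gM)$ satisfies the very relatedness property that characterizes $\tilde\rho(v)$, and then to invoke uniqueness. Since $\tilde\sigma$ is a surjective submersion, $\tilde\sigma^*$ is injective, so $\tilde\rho(v)$ is the unique vector field on $T^*[1]\gM$ that is $\tilde\sigma$-related to $\tilde v$. Because $\tilde\sigma^*X=1\otimes X$ for $X\in\multivect(\gM)$, the whole statement reduces to the single identity
\begin{equation*}
\tilde v(1\otimes X)=1\otimes[\rho(v),X]\qquad\text{for all }X\in\multivect(\gM);
\end{equation*}
indeed, knowing $\tilde v$ is $\tilde\sigma$-related to $\tilde\rho(v)$ gives $1\otimes\tilde\rho(v)(X)=\tilde v(\tilde\sigma^*X)$, so the identity together with injectivity of $\tilde\sigma^*$ forces $\tilde\rho(v)(X)=[\rho(v),X]$.

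First I would record the action of $\tilde v$ on two natural classes of generators. Setting $X=g\in C^\infty(\gM)$ in \eqref{eqn:tildev1} shows that $\tilde v$ kills $\tilde\tau^*g=\sigma_*(g,0)=(g\circ\mathrm{pr}_\gM)\otimes 1$, while \eqref{eqn:tildev2} gives $\tilde v(\alpha\otimes1)=\linv v(\alpha)\otimes1$ for $\alpha\in C^\infty(\gG)$. As the pullbacks from the two factors generate $C^\infty(\gM\times\gG)$, the derivation property shows that $\tilde v$ acts on the base functions $h\otimes1$ exactly as the vector field $(0,\linv v)$, and that it annihilates every $\tilde\tau^*Y=\sigma_*(Y,0)$ with $Y\in\vect(\gM)$. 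The crux is then the intertwining relation
\begin{equation*}
\tilde v(\sigma_* Z)=\sigma_*\big([(0,\linv v),Z]\big),
\end{equation*}
which I claim holds for every horizontal multivector field $Z$ on $\gM\times\gG$. To prove it I would use that $\sigma_*$ is an algebra morphism for the (symmetric) product of multivector fields, and that, restricted to the horizontal subalgebra, both sides of the display are derivations along $\sigma_*$; here one needs that $(0,\linv v)$ commutes with the $\gM$-direction fields $(Y,0)$, so that $[(0,\linv v),\cdot]$ preserves horizontality. Two derivations along the same algebra morphism agree once they agree on algebra generators, and the horizontal subalgebra is generated by the base functions $h$ and the fields $(Y,0)$; on these the identity is immediate from the previous observations (both sides give $(0,\linv v)(h)\otimes1$ on base functions, and both vanish on $(Y,0)$, the right-hand side because $(0,\linv v)$ and $(Y,0)$ lie in complementary factors and so have vanishing Schouten bracket).

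To finish, I would apply the intertwining relation to the horizontal lift $Z=\iota(1\otimes X)$, which satisfies $\sigma_* Z=1\otimes X$ and is therefore $\sigma$-related to $X$. Since $(0,\linv v)$ is $\sigma$-related to $\rho(v)$ by definition of the infinitesimal action, and the Schouten bracket preserves $\sigma$-relatedness, the bracket $[(0,\linv v),Z]$ is $\sigma$-related to $[\rho(v),X]$, so $\sigma_*([(0,\linv v),Z])=1\otimes[\rho(v),X]$. Combined with the intertwining relation this yields $\tilde v(1\otimes X)=1\otimes[\rho(v),X]$, which is exactly the identity needed.

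I expect the main obstacle to be proving the intertwining relation \emph{without circularity}. The naive strategy of checking it on a generating set of all of $\multivect(\gM\times\gG)$ fails, because the group-direction generators $(0,\linv w)$ push forward to $\sigma_*(0,\linv w)=1\otimes\rho(w)$, i.e.\ to a function of the form $1\otimes X$ whose image under $\tilde v$ is precisely what the lemma is trying to compute, so that step would merely restate the claim. Confining attention to horizontal $Z$, and using that horizontality is generated by the base functions together with the $\gM$-direction fields, is what evades this; verifying carefully that $[(0,\linv v),\cdot]$ genuinely preserves the horizontal subalgebra is the delicate point.
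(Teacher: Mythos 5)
Your proposal is correct and follows essentially the same route as the paper: your ``intertwining relation'' $\tilde v(\sigma_* Z)=\sigma_*\bigl([(0,\linv v),Z]\bigr)$ for horizontal $Z$ is exactly the paper's identity $\tilde v(\theta)=\sigma_*[(0,\linv v),\iota(\theta)]$, verified on the same generators $\tilde\tau^*X$ and $\alpha\otimes 1$, and the conclusion is reached by the same argument that $\iota(1\otimes X)$ is $\sigma$-related to $X$ and that the Schouten bracket preserves relatedness. Your extra care about why checking on generators suffices (derivations along the algebra morphism $\sigma_*$) makes explicit a step the paper leaves implicit, but it is the same proof.
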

\begin{proof}
 First, we claim that 
\begin{equation}\label{eqn:tildev3}
 \tilde{v}(\theta) = \sigma_* [(0,\linv{v}), \iota(\theta)]
\end{equation}
 for all $\theta \in C^\infty(\sigma^*(T^*[1]\gM))$. Indeed, for $X \in \multivect(\gM)$, we have
\begin{equation*}
 \begin{split}
  \sigma_* [(0,\linv{v}), \iota(\tilde{\tau}^*X)] &=  \sigma_* [(0,\linv{v}), \iota \circ \sigma_*(X,0)] \\
&=  \sigma_* [(0,\linv{v}), (X,0)] \\
&= 0,
 \end{split}
\end{equation*}
and for $\alpha \in C^\infty(\gG)$, we have
\begin{equation*}
 \begin{split}
  \sigma_* [(0,\linv{v}), \iota(\alpha \otimes 1)] &=  \sigma_* [(0,\linv{v}), (0,\alpha)] \\
&=  \sigma_* (0,\linv{v}(\alpha)) \\
&= \linv{v}(\alpha) \otimes 1.
 \end{split}
\end{equation*}
Thus we see that \eqref{eqn:tildev3} is valid, since it agrees with \eqref{eqn:tildev1} and \eqref{eqn:tildev2}. 

To prove the lemma, we need to show that $\tilde{v}$ is $\tilde{\sigma}$-related to $[\rho(v), \cdot]$ for $v \in \LiegG$. Using \eqref{eqn:tildev3} and the fact that $\iota(1 \otimes X)$ is $\sigma$-related to $X$, we have
\begin{equation*}
 \begin{split}
  \tilde{v}(\tilde{\sigma}^*X) &= \tilde{v}(1 \otimes X) \\
&= \sigma_*[(0, \linv{v}), \iota(1 \otimes X)] \\
&= 1 \otimes [\rho(v),X] \\
&= \tilde{\sigma}^*([\rho(v),X])
 \end{split}
\end{equation*}
for $X \in \multivect(\gM)$, as desired.
\end{proof}

An immediate consequence of Lemma \ref{lemma:momentlift} is the following:
\begin{cor}
 The cotangent lift action $\tilde{\sigma}$ is Hamiltonian, with moment map $\mu: T^*[1]\gM \to \LiegG^*[1]$ given by $\mu^*(v) = \rho(v)$ for $v \in \LiegG[-1]$.
\end{cor}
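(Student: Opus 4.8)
The plan is to obtain the statement directly from Lemma~\ref{lemma:momentlift}, once the canonical symplectic geometry of $T^*[1]\gM$ is brought to bear. Recall from \S\ref{sec:pinftyman} that the Schouten bracket on $\multivect(\gM) = C^\infty(T^*[1]\gM)$ is precisely the Poisson bracket of the canonical degree $1$ symplectic structure. Consequently, for a function $h \in C^\infty(T^*[1]\gM)$---that is, a multivector field $h \in \multivect(\gM)$---the Hamiltonian vector field $X_h$ is the Schouten-bracket derivation $[h, \cdot\,]$. An action is Hamiltonian with moment map $\mu$ exactly when every infinitesimal generator $\tilde{\rho}(v)$ is the Hamiltonian vector field of $\mu^*(v)$, so the entire claim reduces to matching $\tilde{\rho}(v)$ with $X_{\rho(v)}$.

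First I would check that $\mu^*(v) = \rho(v)$ really defines a map $\mu : T^*[1]\gM \to \LiegG^*[1]$. Since $C^\infty(\LiegG^*[1]) = \bigS(\LiegG[-1])$ is freely generated as a graded commutative algebra by its linear part $\LiegG[-1]$, the prescription on generators is nothing but the morphism $\hat{\rho} = \bigS(\rho[-1])$ of Proposition~\ref{prop:infaction}. As that proposition already establishes $\hat{\rho}$ to be a (degree-preserving) morphism of graded algebras, the assignment extends uniquely and is the pullback of a genuine map $\mu$; so $\mu$ is well defined with no further work.

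It then remains only to identify $\tilde{\rho}(v)$ with $X_{\mu^*(v)}$. By Lemma~\ref{lemma:momentlift}, $\tilde{\rho}(v) = [\rho(v), \cdot\,]$, and by the opening remark the right-hand side is exactly the Hamiltonian vector field of $\rho(v) = \mu^*(v)$. Hence $\tilde{\rho}(v) = X_{\mu^*(v)}$ for all $v \in \LiegG$, which is precisely the assertion that the cotangent lift $\tilde{\sigma}$ is Hamiltonian with moment map $\mu$.

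To complete the picture I would record the infinitesimal equivariance of $\mu$, confirming that it is a moment map in the sense of \S\ref{sec:hamiltonian}. Because $\sigma$ is a right action, $\rho$ respects brackets (as used in Proposition~\ref{prop:infaction}), so $\{\mu^*(v), \mu^*(w)\} = [\rho(v), \rho(w)] = \rho([v,w]) = \mu^*([v,w])$; this integrates to the equivariance diagram analogous to~\eqref{diag:equivariance} when $\gG$ is connected. I anticipate no genuine obstacle: all the substance is already contained in Lemma~\ref{lemma:momentlift}, and the only point requiring care is conventional bookkeeping---ensuring that the sign and degree conventions are aligned so that ``Hamiltonian vector field of $\rho(v)$'' and ``the Schouten derivation $[\rho(v), \cdot\,]$'' coincide on the nose under the identification $\multivect(\gM) = C^\infty(T^*[1]\gM)$.
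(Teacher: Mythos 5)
Your proposal is correct and takes the same route as the paper, which simply declares the corollary ``an immediate consequence'' of Lemma~\ref{lemma:momentlift}: the whole content is that $[\rho(v),\cdot\,]$ is the Hamiltonian vector field of $\rho(v)$ under the identification of the Schouten bracket with the canonical degree $1$ Poisson bracket on $T^*[1]\gM$. Your additional checks (that $\mu$ is well defined via $\hat{\rho}$ and that it is equivariant) are consistent with Proposition~\ref{prop:infaction} and fill in exactly the bookkeeping the paper leaves implicit.
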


Finally, we will show, using the fact that the action of $\gG$ on $\gM$ is homotopy Poisson, that the cotangent lift action is $Q$-Hamiltonian, and therefore the hypotheses of Theorem \ref{thm:reduction} are satisfied.

Following the construction of \S\ref{sec:hamiltonian}, we lift the moment map to a map $\tilde{\mu}: \sigma^*(T^*[1]\gM) \to T^*[1]\gG$, which in this case is given by $\tilde{\mu}^*\theta = \sigma_*(0,\theta)$ for $\theta \in \multivect(\gG)$. Then we define the vector field $\Phi \in \vect(\sigma^*(T^*[1]\gM))$ by
\begin{align*}
 \Phi(\tilde{\tau}^*X) &= \tilde{\tau}^*(d_\pi X),\\
\Phi(\alpha \otimes 1) &= \tilde{\mu}^* d_\phi \alpha,
\end{align*}
for $X \in \multivect(\gM)$ and $\alpha \in C^\infty(\gG)$.

\begin{lemma}\label{lemma:philift}
$\Phi = \sigma_* \circ d_{(\pi,\phi)} \circ \iota$.
\end{lemma}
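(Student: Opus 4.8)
My plan is to verify the operator identity on a generating set and to observe that both sides are graded derivations of degree $1$, so that agreement on generators suffices. As the definition of $\Phi$ and the characterization of $\tilde{v}$ through \eqref{eqn:tildev1}--\eqref{eqn:tildev2} show, the algebra $C^\infty(\sigma^*(T^*[1]\gM))$ is generated by the functions $\tilde{\tau}^*X$ (for $X \in \multivect(\gM)$) together with the functions $\alpha \otimes 1$ (for $\alpha \in C^\infty(\gG)$), so a degree $1$ derivation is determined by its values on these two families.

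First I would argue that the right-hand side $\Psi \defequal \sigma_* \circ d_{(\pi,\phi)} \circ \iota$ is a graded derivation of degree $1$. This is precisely the mechanism that already makes $\tilde{v} = \sigma_* \circ [(0,\linv{v}),\,\cdot\,] \circ \iota$ a vector field in \eqref{eqn:tildev3}: the map $\iota$ is an algebra homomorphism onto the horizontal multivector fields, $\sigma_*$ is an algebra homomorphism, $\sigma_* \circ \iota = \mathrm{id}$, and $d_{(\pi,\phi)} = [(\pi,\phi),\,\cdot\,]$ is a graded derivation of the wedge product, since the Schouten bracket with any fixed multivector field is. Combining these four facts yields the Leibniz rule for $\Psi$.

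Next I would evaluate $\Psi$ on the two families. In the proof of Lemma \ref{lemma:momentlift} it was shown that $\iota(\tilde{\tau}^*X) = (X,0)$ and $\iota(\alpha \otimes 1) = (0,\alpha)$. Since the two summands of $(\pi,\phi)$ live on different factors of $\gM \times \gG$, the Schouten bracket splits, giving $[(\pi,\phi),(X,0)] = (d_\pi X, 0)$ and $[(\pi,\phi),(0,\alpha)] = (0,d_\phi \alpha)$. Applying $\sigma_*$ and invoking the definitions $\tilde{\tau}^*Y = \sigma_*(Y,0)$ and $\tilde{\mu}^*\theta = \sigma_*(0,\theta)$, I obtain $\Psi(\tilde{\tau}^*X) = \tilde{\tau}^*(d_\pi X)$ and $\Psi(\alpha \otimes 1) = \tilde{\mu}^* d_\phi \alpha$, which are exactly the defining values of $\Phi$. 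Hence $\Phi = \Psi$.

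The computations themselves are routine, essentially bookkeeping already carried out for Lemma \ref{lemma:momentlift}. The step that genuinely needs care is the verification that $\Psi$ is a derivation, and in particular that $\iota$ is multiplicative---equivalently, that the horizontal multivector fields form a subalgebra on which $\sigma_*$ restricts to an isomorphism. This holds because $\sigma$ is a submersion and, for fixed $g \in \gG$, right translation identifies the $\gM$-directions at $(p,g)$ isomorphically with the tangent space at $\sigma(p,g)$, so that the fiberwise pushforward $\sigma_*$ is a fiberwise isomorphism of exterior algebras on the horizontal part.
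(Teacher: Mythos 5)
Your proposal is correct and follows essentially the same route as the paper: both sides are determined by their values on the generating families $\tilde{\tau}^*X$ and $\alpha \otimes 1$, and the evaluations $\iota(\tilde{\tau}^*X)=(X,0)$, $\iota(\alpha\otimes 1)=(0,\alpha)$ combined with the splitting of $d_{(\pi,\phi)}$ over the product reproduce the paper's two displayed computations. The only difference is that you make explicit the derivation property of $\sigma_* \circ d_{(\pi,\phi)} \circ \iota$, which the paper leaves implicit in the phrase ``it suffices to check.''
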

\begin{proof}
It suffices to check that $\Phi$ and $\sigma_* \circ d_{(\pi,\phi)} \circ \iota$ agree on functions of the form $\tilde{\tau}^*X$ and $\alpha \otimes 1$ for $X \in \multivect(\gM)$ and $\alpha \in C^\infty(\gG)$. Thus we compute
\begin{equation*}
 \begin{split}
   \Phi(\tilde{\tau}^*X) &= \sigma_*(d_\pi X,0)\\
&= \sigma_* \circ d_{(\pi,\phi)}(X,0) \\
&= \sigma_* \circ d_{(\pi,\phi)} \circ \iota \circ \sigma_*(X,0) \\
&= \sigma_* \circ d_{(\pi,\phi)} \circ \iota \circ \tilde{\tau}^*X
 \end{split}
\end{equation*}
and
\begin{equation*}
 \begin{split}
\Phi(\alpha \otimes 1) &= \sigma_* (0, d_\phi \alpha)\\
&= \sigma_* \circ d_{(\pi,\phi)} \alpha \\
&= \sigma_* \circ d_{(\pi,\phi)} \circ \iota(\alpha \otimes 1).\qedhere
 \end{split}
\end{equation*}
\end{proof}

\begin{prop}\label{prop:quotientham}
The cotangent lift $\tilde{\sigma} : T^*[1]\gM \times \gG \to T^*[1]\gM$ is a $Q$-Hamiltonian action.
\end{prop}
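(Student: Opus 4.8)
The plan is to verify directly the two conditions of Definition \ref{dfn:qham}: that the moment map $\mu$ and the action map $\tilde{\sigma}$ are each morphisms of $Q$-manifolds. Throughout, $T^*[1]\gM$ carries the homological vector field $d_\pi$ coming from the Poisson multivector field $\pi$ on $\gM$, the target $\LiegG^*[1]$ carries $\hat{d}_\phi$, and the source of the action map, $\sigma^*(T^*[1]\gM) = T^*[1]\gM \times \gG$, carries the vector field $\Phi$ constructed above. Keeping straight which of these three differentials lives on which manifold is what drives the bookkeeping below.

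First I would check that $\mu$ is a $Q$-morphism, i.e.\ that $d_\pi$ is $\mu$-related to $\hat{d}_\phi$, which amounts to $d_\pi \circ \mu^* = \mu^* \circ \hat{d}_\phi$. By the preceding corollary, $\mu^* v = \rho(v)$ for $v \in \LiegG[-1]$, so $\mu^*$ and $\hat{\rho} = \bigS(\rho[-1])$ are both the algebra morphism $\bigS(\LiegG[-1]) \to \multivect(\gM)$ extending $v \mapsto \rho(v)$ on generators; hence $\mu^* = \hat{\rho}$. Proposition \ref{prop:infaction} asserts precisely that $\hat{\rho}$ respects the differentials, $d_\pi \circ \hat{\rho} = \hat{\rho} \circ \hat{d}_\phi$, and rewriting this with $\mu^*$ in place of $\hat{\rho}$ is exactly the required relatedness. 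As a by-product, the lemma in \S\ref{sec:hamiltonian} then guarantees $\Phi^2 = 0$, so that $\Phi$ is a genuine $Q$-structure on the source and the second condition is meaningful.

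Next I would check that $\tilde{\sigma}$ is a $Q$-morphism, i.e.\ that $\Phi$ is $\tilde{\sigma}$-related to $d_\pi$, which means $\Phi(\tilde{\sigma}^* X) = \tilde{\sigma}^*(d_\pi X)$ for all $X \in \multivect(\gM)$. Since $\tilde{\sigma}^* X = 1 \otimes X$ and, by Lemma \ref{lemma:philift}, $\Phi = \sigma_* \circ d_{(\pi,\phi)} \circ \iota$, I would compute $\Phi(1 \otimes X) = \sigma_*\,[(\pi,\phi), \iota(1 \otimes X)]$. The definition of homotopy Poisson action gives that $(\pi,\phi)$ is $\sigma$-related to $\pi$, and $\iota(1 \otimes X)$ is $\sigma$-related to $X$ by construction of $\iota$; because the Schouten bracket preserves relatedness, $[(\pi,\phi), \iota(1 \otimes X)]$ is $\sigma$-related to $[\pi, X] = d_\pi X$. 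Applying $\sigma_*$ then yields $1 \otimes d_\pi X = \tilde{\sigma}^*(d_\pi X)$, as desired.

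Together these two verifications give the result via Definition \ref{dfn:qham}. I do not expect a genuinely hard step: the content has been front-loaded into Proposition \ref{prop:infaction} and Lemma \ref{lemma:philift}. The main obstacle is instead the careful tracking of $\sigma$-relatedness through the pushforward $\sigma_*$ and the horizontal lift $\iota$, together with the observation $\mu^* = \hat{\rho}$ that lets Proposition \ref{prop:infaction} be invoked verbatim. Once those identifications are in place, both conditions fall out immediately from the single principle that the Schouten bracket preserves relatedness.
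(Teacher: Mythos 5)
Your proposal is correct and follows essentially the same route as the paper: the moment map condition is exactly the identification $\mu^* = \hat{\rho}$ combined with Proposition \ref{prop:infaction}, and the action map condition is obtained from Lemma \ref{lemma:philift} together with the $\sigma$-relatedness of $(\pi,\phi)$ to $\pi$ and of $\iota(1 \otimes X)$ to $X$. The only cosmetic difference is that you unpack the statement ``$\sigma$ is a homotopy Poisson map'' into the underlying Schouten-bracket relatedness argument, which the paper leaves implicit.
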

\begin{proof}
Proposition \ref{prop:infaction} implies that the moment map $\mu$ is a $Q$-manifold morphism. It remains to show that $\tilde{\sigma}$ is a $Q$-manifold morphism.

 The fact that $\sigma$ is a homotopy Poisson map means that, if $Z \in \multivect(\gM \times \gG)$ is $\sigma$-related to $X \in \multivect(\gM)$, then $d_{(\pi,\phi)} Z$ is $\sigma$-related to $d_\pi X$. For all $X \in \multivect(\gM)$, we have that $\iota(1 \otimes X)$ is $\sigma$-related to $X$. Therefore $d_{(\pi,\phi)} \circ \iota(1 \otimes X)$ is $\sigma$-related to $d_\pi X$, or in other words,
\begin{equation*}
 \sigma_* \circ d_{(\pi,\phi)} \circ \iota(1 \otimes X) = 1 \otimes d_\pi X.
\end{equation*}
Using Lemma \ref{lemma:philift}, we then have $\Phi(1 \otimes X) = 1 \otimes d_\pi X$, which may be rewritten as $\Phi(\tilde{\sigma}^*X) = \tilde{\sigma}^*(d_\pi X)$. Thus we conclude that $\Phi$ is $\tilde{\sigma}$-related to $d_\pi$.
\end{proof}

Proposition \ref{prop:quotientham} allows us to apply Theorem \ref{thm:reduction}, and the following result is a straightforward consequence.

\begin{thm}\label{thm:reductionaction}
 Let $\gM$ be a homotopy Poisson manifold, and let $\gG$ be a flat homotopy Poisson Lie group with a free and proper homotopy Poisson action on $\gM$. Then the homotopy Poisson structure on the quotient $\gM/\gG$ is given by the induced homological vector field on the symplectic quotient $T^*[1]\gM // \gG = T^*[1](\gM/\gG)$.
\end{thm}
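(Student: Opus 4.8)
The plan is to feed the cotangent-lift construction of \S\ref{sec:pinftyact2} into Theorem \ref{thm:reduction} and then to recognize the output as the structure that Lemma \ref{lemma:quotient} places on $\gM/\gG$. Throughout, write $p: \gM \to \gM/\gG$ for the quotient projection and $\pi_\red$ for the Poisson multivector field corresponding under Proposition \ref{prop:correspondence} to the reduced homotopy Poisson structure supplied by Lemma \ref{lemma:quotient}.

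First I would verify the hypotheses of Theorem \ref{thm:reduction}. Proposition \ref{prop:quotientham} already gives that the cotangent lift $\tilde{\sigma}$ is $Q$-Hamiltonian, with moment map determined by $\mu^*(v) = \rho(v)$; since $\sigma$ is free and proper, so is $\tilde{\sigma}$, and, exactly as in the classical cotangent-lift setting, freeness forces the fundamental vector fields $\rho(v)$ to be pointwise independent, so that $0$ is a regular value of $\mu$ and $\gG$ acts freely on $\mu^{-1}(0)$. The homological vector field on $T^*[1]\gM$ being reduced is $d_\pi = [\pi,\cdot]$. Theorem \ref{thm:reduction} thus produces a symplectic homological vector field on $\mu^{-1}(0)/\gG$, namely the descent of $d_\pi$.

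Second I would establish the identification $\mu^{-1}(0)/\gG = T^*[1](\gM/\gG)$, the graded analogue of the classical fact that reducing a cotangent bundle at the zero level of a cotangent-lifted moment map returns the cotangent bundle of the quotient. At the level of functions this amounts to identifying $N(\cI)/\cI$ with $\multivect(\gM/\gG)$, where $\cI \subseteq \multivect(\gM) = C^\infty(T^*[1]\gM)$ is the ideal generated by $\{\rho(v) : v \in \LiegG\}$. The normalizer $N(\cI)$ modulo $\cI$ is the algebra of $p$-basic multivector fields, which is precisely $\multivect(\gM/\gG) = C^\infty(T^*[1](\gM/\gG))$, and under this identification the reduced Schouten bracket is the canonical one on $\gM/\gG$. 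The push-forward and horizontal-lift maps $\sigma_*$ and $\iota$ introduced in \S\ref{sec:pinftyact2} are the natural tools for making this explicit. Next I would match the reduced vector field with $d_{\pi_\red}$. The proof of Theorem \ref{thm:reduction} shows that $\cI$ is $d_\pi$-invariant, i.e.\ $\{\pi, \cI\} \subseteq \cI$, so $\pi \in N(\cI)$ descends to some $\bar{\pi} \in N(\cI)/\cI$, and the descent of $d_\pi$ is $[\bar{\pi}, \cdot]$. It then remains to see $\bar{\pi} = \pi_\red$. Since $p$ is a homotopy Poisson morphism, $\pi$ is $p$-related to $\pi_\red$; concretely, for invariant functions $f_1, \dots, f_\ell$ (the degree-$0$ elements of $N(\cI)$) the derived bracket $[[\cdots[\bar{\pi}, f_1], \cdots], f_\ell]$ is the descent of $\beta_\ell(f_1, \dots, f_\ell) = [[\cdots[\pi, f_1], \cdots], f_\ell]$, which by Lemma \ref{lemma:quotient} is the reduced $\ell$-ary bracket. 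Uniqueness in the derived-bracket correspondence then gives $\bar{\pi} = \pi_\red$, so the reduced homological vector field is $d_{\pi_\red}$, which by Proposition \ref{prop:correspondence} encodes the homotopy Poisson structure on $\gM/\gG$.

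I expect the second step to be the main obstacle: carefully proving the graded cotangent-reduction identification $\mu^{-1}(0)/\gG = T^*[1](\gM/\gG)$, together with the claim that the reduced Schouten bracket is the expected one, is where the geometry of the construction in \S\ref{sec:pinftyact2} is genuinely used. The remaining steps are essentially bookkeeping with relatedness, derived brackets, and the invariance statements already established.
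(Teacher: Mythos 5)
Your proposal is correct and follows exactly the route the paper intends: the paper gives no detailed proof, stating only that Proposition \ref{prop:quotientham} allows one to apply Theorem \ref{thm:reduction} and that the result is a ``straightforward consequence.'' Your write-up simply fills in the details of that same argument (regularity of $0$, the identification $\mu^{-1}(0)/\gG = T^*[1](\gM/\gG)$, and matching the descended vector field with $d_{\pi_\red}$ via the derived brackets and Lemma \ref{lemma:quotient}).
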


\begin{example}
 Let $M$ be a Poisson manifold, and let $G$ be a Poisson Lie group with a free and proper Poisson action on $M$. Then Theorem \ref{thm:reductionaction} says that the reduced symplectic $NQ$-manifold $T^*[1]M//G$ corresponds to the quotient Poisson manifold $M/G$.
Thus we have an interpretation of Poisson quotients in terms of reduction of degree $1$ symplectic $NQ$-manifolds.
\end{example}

\section{Homotopy Poisson structures of degree $n$}\label{sec:higher}

The framework of ``reduction by homotopy Poisson Lie group action'' described in \S\ref{sec:hamiltonian} is specific to degree $1$ symplectic $Q$-manifolds. In order to deal with symplectic $Q$-manifolds of higher degree, we need to consider analogues of homotopy Poisson Lie groups in higher degrees. In this section, we briefly give definitions and state results for homotopy Poisson structures of degree $n$. We omit proofs, since those for the case $n=1$ generalize in a straightforward way.

A \emph{homotopy Poisson algebra structure of degree $n$} on $\gA$ is defined exactly as in Definition \ref{dfn:palg}
except that $\shiftgA$  is now defined to be the degree $n$ suspension $\gA[n]$, so that $\shiftgA_i = \gA_{i+n}$. If $\beta_\ell$ is thought of as a bracket on $\gA$, then $\beta_\ell$ is of degree $1+(1-\ell)n$.

A \emph{homotopy Poisson manifold of degree $n$} is a graded manifold $\gM$ whose algebra of functions $C^\infty(\gM)$ is equipped with a degree $n$ homotopy Poisson algebra structure of finite type. A degree $n$ homotopy Poisson structure on a graded manifold $\gM$ may be described by a degree $n+1$ element $\pi \in C^\infty(T^*[n]\gM)$ such that $\{\pi,\pi\}=0$. Here, the bracket is the canonical degree $-n$ Poisson bracket on $T^*[n]\gM$. The map $\pi \mapsto d_\pi$, where $d_\pi = \{\pi,\cdot\}$ gives a bijection between degree $n$ homotopy Poisson structures and polynomial symplectic homological vector fields on $T^*[n]\gM$.

A \emph{homotopy Poisson Lie group of degree $n$} is a graded Lie group equipped with a homotopy Poisson manifold structure such that the multiplication map is a homotopy Poisson morphism. A \emph{homotopy Lie bialgebra of degree $n$} is a graded Lie algebra $\LiegG$ equipped with a differential on $\bigS(\LiegG[-n])$ that is a derivation of the degree $-n$ Schouten bracket. Such a differential on $\bigS(\LiegG[-n])$ may equivalently be viewed as a polynomial homological Poisson vector field on $\LiegG^*[n]$, or an $\linfty$-algebra structure on $\LiegG^*[n-1]$ that is compatible with the Lie algebra structure on $\LiegG$.

We remark that a $Q$-group (resp.\ DGLA) may be viewed as a homotopy Poisson Lie group (resp.\ homotopy Lie bialgebra) of any degree.

Let $\gS$ be a degree $n$ symplectic $Q$-manifold, and let $\gG$ be a flat degree $n$ homotopy Poisson Lie group with a Hamiltonian action on $\gS$. The moment map $\mu: \gS \to \LiegG^*[n]$ may be lifted to a map $\tilde{\mu}: \gS \times \gG \to T^*[n]\gG$, which may then be used to define a vector field $\Phi$ on $\gS \times \gG$, as in \S\ref{sec:hamiltonian}. As in Definition \ref{dfn:qham}, the action is called $Q$-Hamiltonian if the moment map and action map are both $Q$-manifold morphisms. Then Theorem \ref{thm:reduction} generalizes as follows.
\begin{thm}\label{thm:reductionhigher}
Let $\gG$ be a flat degree $n$ homotopy Poisson Lie group with a $Q$-Hamiltonian action on a degree $n$ symplectic $Q$-manifold $\gS$. If $0$ is a regular value of $\mu$ and the action of $\gG$ on $\mu^{-1}(0)$ is free, then the homological vector field on $\gS$ descends to a symplectic homological vector field on $\mu^{-1}(0)/\gG$.
\end{thm}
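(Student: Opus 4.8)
The plan is to reproduce the proof of Theorem \ref{thm:reduction} essentially verbatim, replacing the degree-$1$ shift throughout by a degree-$n$ shift. Concretely, the comoment map now sends $\LiegG[-n] \subseteq C^\infty(\LiegG^*[n])$ into $C^\infty(\gS)$, the symplectic bracket on $\gS$ has degree $-n$, and the homotopy Lie bialgebra differential $\hat{d}_\phi$ acts on $\bigS(\LiegG[-n])$. As before, the argument splits into three parts: showing that $\mu^{-1}(0)$ is coisotropic, showing that the ideal $\cI$ of functions vanishing on $\mu^{-1}(0)$ is $Q_\gS$-invariant, and showing that $Q_\gS$ descends to the quotient, with the connected and non-connected cases treated separately.

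First I would use regularity of the value $0$ to conclude that $\cI$ is generated by $\{\mu^* v \suchthat v \in \LiegG[-n]\}$. The equivariance of $\mu$, expressed by the degree-$n$ analogue of diagram \eqref{diag:equivariance}, should yield the comoment identity $\{\mu^*v, \mu^*w\} = \mu^*[v,w]$ for $v,w \in \LiegG[-n]$, now with respect to the degree $-n$ bracket; this shows $\cI$ is closed under the Poisson bracket, so $\mu^{-1}(0)$ is coisotropic. Next, since $\mu$ is a $Q$-manifold morphism, $Q_\gS(\mu^* v) = \mu^*(\hat{d}_\phi v)$, and flatness of $\phi$ guarantees that $\hat{d}_\phi v$ has no constant component and hence vanishes at $0 \in \LiegG^*[n]$. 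Therefore $Q_\gS(\mu^* v)$ vanishes on $\mu^{-1}(0)$, proving that $\cI$ is $Q_\gS$-invariant.

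For connected $\gG$, the functions on the symplectic quotient form $N(\cI)/\cI$, where $N(\cI)$ is the Poisson normalizer of $\cI$; the $Q_\gS$-invariance of $\cI$ forces $N(\cI)$ to be $Q_\gS$-invariant, so $Q_\gS$ descends, and the descended field inherits the properties of being symplectic and homological. For non-connected $\gG$, I would construct the lifted vector field $\Phi$ on $\gS \times \gG$ from the lifted moment map $\tilde{\mu}: \gS \times \gG \to T^*[n]\gG$ exactly as in \S\ref{sec:hamiltonian}, observe that its restriction to $\mu^{-1}(0) \times \gG$ is the product vector field $(Q_\gS, \phi_1)$ with $\phi_1$ the vector field component of $\phi$, and invoke the $Q$-Hamiltonian hypothesis to see that $\sigma_0 : \mu^{-1}(0) \times \gG \to \mu^{-1}(0)$ is a $Q$-manifold morphism. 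Then Lemma \ref{lemma:quotient}, applied to the action of $(\gG, \phi_1)$ on $(\mu^{-1}(0), Q_\gS)$, gives descent of $Q_\gS$ to the global quotient.

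The genuine content beyond bookkeeping is confirming that the degree-shifted ingredients behave as they do in degree $1$. The two places I would check with care are the comoment identity, where the sign conventions attached to the degree-$n$ suspension must be verified to preserve the relation $\{\mu^*v, \mu^*w\} = \mu^*[v,w]$, and the graded Marsden--Weinstein picture, namely that the identification of quotient functions with $N(\cI)/\cI$ and the inheritance of a degree $n$ symplectic structure remain valid for a degree $-n$ bracket. I expect these degree-and-sign checks to be the only obstacle; the homological descent and the reduction to Lemma \ref{lemma:quotient} in the non-connected case are formally identical to the $n=1$ argument and require no new idea.
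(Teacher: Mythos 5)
Your proposal matches the paper exactly: the paper states that the proofs in \S\ref{sec:higher} are omitted because the $n=1$ arguments ``generalize in a straightforward way,'' and your plan is precisely to rerun the proof of Theorem \ref{thm:reduction} with the degree-$n$ shift, flagging the comoment identity and the graded Marsden--Weinstein step as the only points needing sign-and-degree verification. This is correct and is the same approach the paper intends.
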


We conclude by describing two examples of degree $2$ homotopy Lie bialgebras.

\begin{example}[Left-central Courant algebras]
A \emph{Courant algebra} \cite{bcg} over a Lie algebra $\mathfrak{g}$ is a vector space $\mathfrak{a}$ equipped with a bilinear bracket $\llbracket \cdot, \cdot \rrbracket$ and a map $p: \mathfrak{a} \to \mathfrak{g}$ such that, for all $a_1, a_2, a_3 \in \mathfrak{a}$,
\begin{enumerate}
\item $\llbracket a_1, \llbracket a_2, a_3 \rrbracket \rrbracket = \llbracket \llbracket a_1,  a_2\rrbracket, a_3  \rrbracket + \llbracket a_2, \llbracket a_1, a_3 \rrbracket \rrbracket$,
\item $p( \llbracket a_1, a_2 \rrbracket) = [p(a_1), p(a_2)]$.
\end{enumerate}
A Courant algebra $\mathfrak{a}$ is \emph{left-central} if \ $\llbracket h, a \rrbracket = 0$ for all $h \in \ker p$, $a \in \mathfrak{a}$.

Let $\mathfrak{a}$ be a left-central Courant algebra, and let $\mathfrak{h} \defequal \ker p$. We may form a graded vector space $\tilde{\mathfrak{g}} \defequal \mathfrak{h}[2] \oplus \mathfrak{a}[1] \oplus \mathfrak{g}$, where elements of $\mathfrak{h}$ and $\mathfrak{a}$ are viewed as being of degree $-2$ and $-1$, respectively. The bracket on $\mathfrak{a}$ induces a graded Lie bracket $\{\cdot, \cdot\}$ on $\tilde{\mathfrak{g}}$, defined as
\begin{equation}\label{eqn:ecabracket}\begin{split}
&\{(h_1,a_1,g_1, h_2,a_2,g_2)\} = \\
&= \left( \llbracket b_1, h_2 \rrbracket + \llbracket a_1, a_2 \rrbracket + \llbracket a_2, a_1 \rrbracket - \llbracket b_2, h_1 \rrbracket, \llbracket b_1, a_2 \rrbracket - \llbracket b_2, a_1 \rrbracket, [g_1,g_2] \right),
\end{split}
\end{equation}
where $b_1, b_2 \in \mathfrak{a}$ are such that $p(b_i) = g_i$.  The left-central property of $\mathfrak{a}$ implies that (\ref{eqn:ecabracket}) is well-defined.

The exact sequence
\begin{equation*}
\xymatrix{0 \ar[r] & \mathfrak{h} \ar[r] & \mathfrak{a} \ar[r] & \mathfrak{g} \ar[r] & 0}
\end{equation*}
determines a differential on $\tilde{\mathfrak{g}}$. This differential is a derivation of the Lie bracket, so $\tilde{\mathfrak{g}}$ is a DGLA.

Let $\gS$ be the degree $2$ symplectic $NQ$-manifold corresponding to a Courant algebroid $E$. Then an infinitesimal $Q$-Hamiltonian action of $\tilde{\mathfrak{g}}$ on $\gS$ may be shown to be equivalent to what Bursztyn, Cavalcanti, and Gualtieri \cite{bcg} called an ``extended action with moment map'' of $\mathfrak{a}$ on $E$. In this case, their notion of moment map reduction coincides with $Q$-Hamiltonian reduction \cite{bcmz}.
\end{example}

\begin{example}[Matched pairs]
 Let $\tilde{\mathfrak{g}} = \mathfrak{h} [1] \oplus \mathfrak{g}$ be a graded Lie algebra; in other words, $\mathfrak{g}$ is a Lie algebra equipped with an action on the vector space $\mathfrak{h}$. The action can be dualized to an action of $\mathfrak{g}$ on $\mathfrak{h}^*$.

A quadratic degree $2$ homotopy Lie bialgebra structure on $\tilde{\mathfrak{g}}$ is given by a graded Lie algebra structure on $\tilde{\mathfrak{g}}^*[1] = \mathfrak{h}^* \oplus \mathfrak{g}^*[1]$, which is equivalent to a Lie algebra structure on $\mathfrak{h}^*$ and an action of $\mathfrak{h}^*$ on $\mathfrak{g}^*$. This action may be dualized to an action of $\mathfrak{h}^*$ on $\mathfrak{g}$.

There is a compatibility condition between the graded Lie algebra structures on $\tilde{\mathfrak{g}}$ and $\tilde{\mathfrak{g}}^*[1]$. Expressed in terms of the actions of $\mathfrak{g}$ and $\mathfrak{h}^*$ on each other, the compatibility condition is exactly that of a \emph{matched pair} \cite{majid:matched1,majid:matched2} of Lie algebras. Thus, Lie algebra matched pairs form a special case of degree $2$ homotopy Lie bialgebras.
\end{example}

\bibliographystyle{abbrv}
\bibliography{bibliography}

\end{document}